\documentclass[10pt,letterpaper]{article}
\usepackage[T1]{fontenc}
\usepackage[utf8]{inputenc}
\usepackage{lmodern}
\usepackage[margin=1in]{geometry}
\usepackage{amsmath,amssymb,amsthm,mathtools}
\usepackage{booktabs,tabularx,array}
\usepackage{graphicx}
\usepackage{enumitem}
\usepackage{algorithm,algpseudocode}
\usepackage[numbers,sort&compress]{natbib}
\usepackage{microtype}
\usepackage{xcolor}
\usepackage[colorlinks=true,linkcolor=blue!50!black,citecolor=blue!50!black,urlcolor=blue!50!black]{hyperref}
\allowdisplaybreaks[1]
\setlist{itemsep=2pt,topsep=4pt,leftmargin=*}
\newtheorem{theorem}{Theorem}
\newtheorem{lemma}[theorem]{Lemma}

\newtheorem{corollary}[theorem]{Corollary}
\theoremstyle{definition}
\newtheorem{assumption}{Assumption}

\theoremstyle{remark}

\newcommand{\R}{\mathbb R}
\newcommand{\E}{\mathbb E}
\newcommand{\Prob}{\mathbb P}
\newcommand{\Id}{I}
\newcommand{\norm}[1]{\left\lVert#1\right\rVert}
\newcommand{\ip}[2]{\left\langle#1,#2\right\rangle}
\newcommand{\rms}[1]{\norm{#1}_{ L^2}}
\newcommand{\cO}{\mathcal O}
\newcommand{\tO}{\widetilde{\mathcal O}}
\newcommand{\res}{\operatorname{res}}
\newcommand{\zer}{\operatorname{zer}}

\newcommand{\cF}{\mathcal F}
\newcommand{\Alg}{\textsc{Switching Regularization}}
\newcolumntype{Y}{>{\raggedright\arraybackslash}X}

\title{\vspace{-0.5in}\bfseries Optimal Oracle Complexity for\protect\\ Finite-Sum Monotone Inclusions}
\author{Qihao Zhou\thanks{Independent Researcher. Email: \texttt{qihao\_zhou@berkeley.edu}.}}
\date{}

\begin{document}
\maketitle
\thispagestyle{plain}
\begin{abstract}
We present an oracle-optimal method for finite-sum monotone inclusions under mean-square Lipschitz continuity. Our switching regularization method finds a point $y$ and a certificate $g\in G(y)$ with $(\E\norm{F(y)+g}^2)^{1/2}\le\varepsilon$ using $\cO(n+\sqrt n LR/\varepsilon)$ expected component evaluations and resolvent evaluations. It removes the additive $n\log n$ cost of restarting a variance-reduced solver at every regularization stage by switching to a centered stochastic proximal iteration at regularization strength $L/\sqrt n$. Carrying an operator estimate between the remaining stages limits their total cost to $\cO(n)$. A matching $\Omega(n+\sqrt n LR/\varepsilon)$ lower bound holds for randomized linear-span component-oracle algorithms with adaptive stopping and expected query budgets. Thus, for $0<\varepsilon\le LR/2$, our method attains the optimal worst-case expected component complexity in this oracle model, up to universal constants.
\end{abstract}

\section{Introduction}
We consider the finite-sum monotone inclusion
\begin{equation}\label{eq:problem}
0\in T(x):=F(x)+G(x),\qquad F(x)=\frac1n\sum_{i=1}^nF_i(x),
\end{equation}
where $F$ is monotone, its components are mean-square $L$-Lipschitz, and $G$ is maximally monotone. This formulation includes composite convex optimization, convex--concave minimax optimization, and variational inequalities. Given component evaluations and the resolvent of $G$, we seek $y$ and $g\in G(y)$ satisfying $\E\norm{F(y)+g}\le\varepsilon$. Let $R$ bound the distance from the initial point to a solution.

For this problem, \citet{cad24} established a variance-reduced Halpern method with $\tO(n+\sqrt n\,LR/\varepsilon)$ complexity. Recursive regularization provides a useful intermediate bound: applying their strongly monotone solver at every regularization stage gives $\cO(n\log(2+n)+\sqrt n\,LR/\varepsilon)$ complexity; see Appendix~\ref{app:baseline}. The remaining additive $n\log n$ term comes from repeated full-sum initializations after the regularization becomes large. It can dominate the accuracy-dependent term when $LR/\varepsilon\lesssim\sqrt n\log n$.

We remove this overhead by changing solvers as the regularization grows. The method doubles the regularization strength $\lambda$ and uses variance-reduced reflected splitting while $\lambda<L/\sqrt n$. Beyond this threshold, a centered stochastic proximal iteration needs only $\cO(1+L^2/\lambda^2)$ calls per stage. Instead of recomputing a full average at each new center, we update a stored operator estimate with sampled differences. Its error enters the stage operator as a constant shift, which lets us bound both the movement of the roots and the final residual. The later stages together cost $\cO(n)$.

This gives an $\cO(n+\sqrt n\,LR/\varepsilon)$ upper bound with an RMS residual guarantee and a computable certificate in $G(y)$. We prove a matching $\Omega(n+\sqrt n\,LR/\varepsilon)$ lower bound on affine unconstrained instances for randomized linear-span component-oracle algorithms. The construction bounds the residual directly and allows adaptive stopping under expected query budgets. Hence the method is optimal up to universal constants in this model. Table~\ref{tab:main} compares the bounds. The algorithm uses supplied bounds $L,R$ and a multistage schedule; obtaining the same rate with a single-loop method remains open.

\begin{table}[tbp]
\centering\small
\caption{Component complexity for Assumption~\ref{ass:main}. A full evaluation costs $n$ component calls. Upper bounds control the expected inclusion residual. The lower bound assumes $0<\varepsilon\le LR/2$ and the linear-span oracle model of Section~\ref{sec:lower}.}\label{tab:main}
\begin{tabularx}{\textwidth}{@{}Y l Y@{}}
\toprule
Method & Component complexity & Reference\\
\midrule
Full-operator acceleration & $\cO(n+nLR/\varepsilon)$ & \citet{coz24,caizheng23}\\
Variance-reduced Halpern & $\tO(n+\sqrt n\,LR/\varepsilon)$ & \citet{cad24}\\
Recursive regularization, restart-only & $\cO(n\log(2+n)+\sqrt n\,LR/\varepsilon)$ & Appendix~\ref{app:baseline}\\
\textbf{Switching regularization} & $\cO(n+\sqrt n\,LR/\varepsilon)$ & \textbf{Theorem~\ref{thm:upper}}\\
\midrule
\textbf{Lower bound} & $\Omega(n+\sqrt n\,LR/\varepsilon)$ & \textbf{Theorem~\ref{thm:lower}}\\
\bottomrule
\end{tabularx}
\end{table}

\section{Preliminaries}
\subsection{Problem setup}\label{sec:setting}

All norms are Euclidean. For a random vector $X$, write $\rms{X}=(\E\norm{X}^2)^{1/2}$. For a maximally monotone operator $A$, $J_A=(\Id+A)^{-1}$ denotes its resolvent.

\begin{assumption}\label{ass:main}
The operator $F:\R^d\to\R^d$ in \eqref{eq:problem} is monotone, $G:\R^d\rightrightarrows\R^d$ is maximally monotone, and
\begin{equation}\label{eq:msl}
\frac1n\sum_{i=1}^n\norm{F_i(x)-F_i(y)}^2\le L^2\norm{x-y}^2
\qquad\text{for all }x,y\in\R^d.
\end{equation}
There is a point $u^*\in\zer(F+G)$ satisfying $\norm{u_0-u^*}\le R$, where positive bounds $L,R$ are available to the algorithm.
\end{assumption}
Jensen's inequality implies that $F$ is $L$-Lipschitz. Individual components may be nonmonotone. The bound $R$ is supplied to the algorithm, and the complexity scales with this bound.

\paragraph{Accuracy and cost.}
We count each evaluation of a chosen $F_i$ and each evaluation of $J_{\eta G}$ as one call to its respective oracle. The two points in a sampled difference cost two component calls. A full average costs $n$ calls. The output is a pair $(y,g)$ with $g\in G(y)$ and
\begin{equation}\label{eq:target}
\rms{F(y)+g}\le\varepsilon.
\end{equation}
This implies the expected residual guarantee for $\res_T(y)=\inf_{h\in G(y)}\norm{F(y)+h}$. Random full-refresh decisions make the component cost random; our complexity is its expectation. Resolvent arithmetic and vector operations are not charged as component evaluations.

\paragraph{Available regularized resolvents.}
All auxiliary operators have the form $B_{\lambda,b}=G+\lambda\Id-b$. Their resolvents satisfy
\begin{equation}\label{eq:resolvent-identity}
J_{\tau B_{\lambda,b}}(v)
=J_{\frac{\tau}{1+\tau\lambda}G}
\left(\frac{v+\tau b}{1+\tau\lambda}\right).
\end{equation}

\subsection{Examples and observable residuals}
For a composite finite-sum objective, take $F=n^{-1}\sum_i\nabla f_i$ and $G=\partial r$. Only the mean smooth objective needs to be convex; \eqref{eq:msl} controls the component gradients even when individual $f_i$ are nonconvex. The $G$-resolvent is a proximal step for $r$. For a convex--concave saddle function $f(x,y)$ with separable convex penalties $r(x),h(y)$, take
\[
 F(x,y)=\bigl(\nabla_x f(x,y),-\nabla_y f(x,y)\bigr),\qquad
 G(x,y)=\partial r(x)\times\partial h(y).
\]
The skew part of a bilinear interaction is monotone but need not be cocoercive. This is why the algorithm uses mean-square Lipschitz continuity rather than a componentwise cocoercivity condition.

The certificate criterion also controls the usual forward--backward mapping
\[
 P_\eta(u)=\eta^{-1}\bigl(u-J_{\eta G}(u-\eta F(u))\bigr).
\]
For $g\in G(u)$, resolvent nonexpansiveness gives $\|P_\eta(u)\|\le\|F(u)+g\|$. Thus the output certificate bounds an observable mapping norm without a bounded-domain assumption. When $G$ is a normal cone, the mapping uses a projection; when $G=0$, it is exactly $F(u)$.

\subsection{Related work}\label{sec:related}
\paragraph{Monotone residuals and variance reduction.}
Anchoring originates in Halpern iteration \citep{halpern67}, while resolvents and proximal regularization are standard in monotone operator theory \citep{rockafellar76,bauschke17}. Deterministic residual acceleration was developed in \citet{diakonikolas20,yoonryu21,coz24,caizheng23}; \citet{parkryu22} established exact complexity bounds for nonexpansive fixed-point iterations. Reflected splitting handles Lipschitz monotone operators without cocoercivity \citep{malitskytam20}. For finite sums, variance reduction has been studied for saddle-point problems, matrix games, reflected splitting, and variational inequalities \citep{palaniappanbach16,carmon19,alacaoglu21,alacaoglu22,zhanghaskellye22}. \citet{huang26} developed accelerated extra-point methods for finite-sum hemivariational inequalities. The direct residual comparison here is \citet{cad24}; its strongly monotone solver is also an ingredient of our method.

\paragraph{Regularization and finite-sum overhead.}
Recursive regularization for small gradients was developed in \citet{allenzhu18,foster19}, and recursive anchoring for stochastic minimax optimization in \citet{chenluo24}. Our estimator uses sampled differences, as in SARAH, SPIDER, and PAGE \citep{nguyen17,fang18,li21}. Additive finite-sum overhead also appears in convex objective minimization. Varag has an $n\log n$ term in its general convex bound \citep{lanli19}; VRADA reduces this to $n\log\log n$ and attains $\cO(1/n)$ objective accuracy in $\cO(n\log\log n)$ evaluations \citep{song20}. These objective-error guarantees motivate a related complexity question, but use a different accuracy criterion from \eqref{eq:target}.

\paragraph{Other operator and noise assumptions.}
Accelerated root-finding and splitting methods under cocoercivity include \citet{trandinh24,vfosa25}. The control-variate theorem of \citet{vapeg26} requires a positive component-control parameter, and \citet{vrfrbs26} studies reflected estimators for a possibly nonmonotone class. In expectation-oracle settings, stochastic Halpern iteration \citep{csdg22}, recursive anchoring \citep{chenluo24}, variance-growth analyses \citep{alacaoglukim26,alacaoglu26}, and direct stochastic acceleration \citep{yoonloizou26} give guarantees under their respective assumptions. Concurrently, \citet{ji26} apply accumulative regularization to stochastic VIs, obtaining near-optimal residual complexity under bounded variance and additional results under state-dependent noise. Their bounds concern a stochastic expectation oracle, whereas our result exploits exact finite-sum access.

\paragraph{Lower bounds.}
Finite-sum hard instances and information arguments have a substantial history \citep{woodworth16,lanzhou18,han24}. \citet{cad24} already establish near-optimality up to logarithmic factors through existing gap lower bounds. Our construction specializes the chain argument to the residual itself: it uses $G=0$, proves an explicit constant, and handles expected query budgets without logarithmic loss. The oracle class is linear-span component-value access, rather than the stronger proximal incremental oracle considered in \citet{han24}.

\section{Algorithm and Complexity}\label{sec:algorithm}
\subsection{Recursive regularization and the switching threshold}
Set
\begin{equation}\label{eq:parameters}
\begin{gathered}
\mu=\frac{\varepsilon}{4R},\quad \lambda_s=\mu2^s,\quad
S=\max\{0,\lceil\log_2(L/\mu)\rceil\},\\
K=\max\{0,\lceil\log_2(L/(\mu\sqrt n))\rceil\},\quad
x_0=u_0,\quad b_0=\mu u_0.
\end{gathered}
\end{equation}
Then $K\le S$, $\lambda_K\ge L/\sqrt n$, and $\lambda_S\ge L$. After computing $x_{s+1}$, update $b_{s+1}=b_s+\lambda_sx_{s+1}$. Thus the next subproblem adds a regularizer centered at the current approximate solution.

For $s<K$, use the strongly monotone solver of \citet[Theorem 4.6]{cad24} for $F+B_{\lambda_s,b_s}$. A constant-factor RMS distance reduction costs $\cO(n+\sqrt n L/\lambda_s)$, including a full initialization. We call this routine \textsc{VRContract}; its precise implementation appears in Appendix~\ref{app:vr}.

At $s=K$, compute $v_K=F(x_K)$ once. All later stages carry the estimate forward. The switch balances two ways to exploit strong monotonicity. The reflected solver has a finite-sum initialization cost even when the regularized problem is easy. A centered stochastic solve instead costs $\cO(1+L^2/\lambda_s^2)$, which becomes order $n$ at $\lambda_s=L/\sqrt n$ and decreases thereafter. The threshold depends on the component smoothness scale, not just the Lipschitz constant of the mean.

The recursive centers are also essential. The accumulated affine term is
\[
 b_s=\mu u_0+\sum_{j<s}\lambda_jx_{j+1},\qquad
 B_{\lambda_s,b_s}
 =G+\mu(\Id-u_0)+\sum_{j<s}\lambda_j(\Id-x_{j+1}).
\]
Consequently, doubling $\lambda_s$ adds a penalty around the latest approximate solution rather than increasing a penalty around the original point. The centers move less as the subproblems become more strongly monotone. This lets the algorithm reduce local solve accuracy and operator-tracking accuracy together. 

\subsection{The centered stochastic proximal iteration}
Given a center $a$, a stored estimate $v$ of $F(a)$, and $\lambda,b$, start $w_0=a$ and iterate
\begin{equation}\label{eq:centered}
Q_t=v+F_{i_t}(w_t)-F_{i_t}(a),\qquad
w_{t+1}=J_{\tau B_{\lambda,b}}(w_t-\tau Q_t),
\end{equation}
where $i_t$ is uniform and independent of the past, and
\begin{equation}\label{eq:inner-parameters}
\tau=\frac{\lambda}{256(L^2+\lambda^2)},\qquad
N=\left\lceil\frac{\log128}{\tau\lambda}\right\rceil.
\end{equation}
Denote the output $w_N$ by \textsc{CenteredContract}$(a,v,\lambda,b)$. These parameters are used throughout the analysis.

The estimator has conditional mean $F(w_t)+e$, where $e=v-F(a)$. Although $e$ is unknown, it is fixed during this call. The relevant stage root is therefore the zero of $F+B_{\lambda,b}+e$.

This interpretation separates two errors. Sampling within a stage introduces zero-mean fluctuations whose second moment is bounded by $L^2\|w_t-a\|^2$. The stored estimate introduces a persistent shift $e$, rather than a new independent error at every iteration. Strong monotonicity makes the shifted root stable: changing this shift by $h$ moves the root by at most $\|h\|/\lambda$. The analysis therefore tracks the shifted roots and charges estimator drift when the center changes. It does not require a uniform bound on component values or component noise at a solution.

\subsection{Updating the stored estimate and returning a certificate}
After a later stage, draw $m_s=\lceil81L^2/\lambda_s^2\rceil$ fresh independent indices and set
\begin{equation}\label{eq:tracking}
v_{s+1}=v_s+\frac1{m_s}\sum_{j=1}^{m_s}
\big(F_{i_{s,j}}(x_{s+1})-F_{i_{s,j}}(x_s)\big).
\end{equation}
The batches may be sampled with replacement; a batch larger than $n$ still costs only a constant multiple of $n$ at the first later stage.

For $x=x_{S+1}$, perform the exact final correction
\begin{equation}\label{eq:final}
\begin{split}
y&=J_{L^{-1}B_{\lambda_S,b_S}}(x-L^{-1}F(x)),\\
g&=L(x-y)-F(x)-\lambda_Sy+b_S\in G(y).
\end{split}
\end{equation}
The full evaluations of $F(x)$ and $F(y)$ cost $2n$. The second is optional for the guarantee but makes the final residual numerically evaluable.

\begin{algorithm}[tbp]
\caption{\Alg}\label{alg:main}
\begin{algorithmic}[1]
\Require $u_0,n,L,R,\varepsilon$; component and $G$-resolvent oracles
\State Set $\mu,S,K,\lambda_s,x_0,b_0$ according to \eqref{eq:parameters}.
\For{$s=0,\ldots,S$}
  \If{$s<K$}
    \State $x_{s+1}\gets\textsc{VRContract}(x_s,\lambda_s,b_s)$ with RMS factor $1/4$.
  \Else
    \If{$s=K$}
      \State $v_s\gets F(x_s)$ \Comment{One full initialization of the later phase}
    \EndIf
    \State $x_{s+1}\gets\textsc{CenteredContract}(x_s,v_s,\lambda_s,b_s)$ via \eqref{eq:centered}--\eqref{eq:inner-parameters}.
  \EndIf
  \If{$s<S$}
    \State $b_{s+1}\gets b_s+\lambda_sx_{s+1}$.
    \If{$s\ge K$}
      \State Draw a fresh batch of size $m_s=\lceil81L^2/\lambda_s^2\rceil$.
      \State Update $v_{s+1}$ using \eqref{eq:tracking}.
    \EndIf
  \EndIf
\EndFor
\State Compute $(y,g)$ from \eqref{eq:final}; optionally evaluate $F(y)$.
\State \Return $(y,g)$.
\end{algorithmic}
\end{algorithm}

\subsection{How the stages use the oracle}\label{sec:oracle-schedule}
\paragraph{A representative accuracy regime.}
Consider $LR/\varepsilon=\sqrt n$, where the target complexity is linear in the number of components. Then $\mu=L/(4\sqrt n)$ and $K=2$. The first two stages use reflected splitting, and the subsequent strengths are
\[
 \lambda_{K+r}=\frac{2^rL}{\sqrt n},\qquad
 0\le r\le\lceil\log_2\sqrt n\rceil.
\]
The centered iteration and its transition batch consequently have costs proportional to $1+n4^{-r}$. Although there are order $\log n$ later stages, their accuracy-dependent costs decrease geometrically. Their constant per-stage costs also sum to order $n$. For example, $n=10^4$ gives ten stages in total: two reflected stages followed by eight centered stages. The latter share a single full initialization. This schedule illustrates the source of the linear finite-sum overhead; it does not assert that the displayed theoretical constants improve the practical running time.

\paragraph{Why differences, rather than noisy component values, matter.}
A useful invariance clarifies the oracle assumption. Replace each component by $\widehat F_i=F_i+c_i$, where the vectors $c_i$ satisfy $\sum_i c_i=0$. The mean operator and the mean-square Lipschitz constant are unchanged, and
\[
 \widehat F_i(u)-\widehat F_i(v)=F_i(u)-F_i(v).
\]
Every full evaluation and every sampled difference in Algorithm~\ref{alg:main} is therefore unchanged. With the same component indices and refresh decisions, the iterates are identical. Individual component values at a solution may nevertheless grow arbitrarily with the offsets. This explains why the complexity depends on component differences and does not include a bound on the component-value variance at a solution. Using the same index at both points is essential to this cancellation.

\paragraph{State and certification.}
The implementation stores a constant number of vectors: the current iterates, the affine regularizer, and the operator reference used by the active solver. The transition batches can be accumulated as running sums, so no table of $n$ component values is required. Each regularized resolvent is evaluated through \eqref{eq:resolvent-identity}; increasing $\lambda_s$ does not introduce a new oracle. The full evaluations in \eqref{eq:final} are charged separately because they serve a different purpose from initialization: they remove the need to identify the unknown stored-estimate error when constructing the output certificate. The resulting pair $(y,g)$ certifies the original inclusion, while the intermediate iterates solve regularized and, in the later phase, shifted inclusions.

\subsection{Complexity guarantee}\label{sec:results}
\begin{theorem}[Upper bound]\label{thm:upper}
Under Assumption~\ref{ass:main}, Algorithm~\ref{alg:main} returns a computable pair $(y,g)$ satisfying $g\in G(y)$ and
\begin{equation}\label{eq:upper-residual}
\rms{F(y)+g}\le\frac{29}{32}\varepsilon<\varepsilon.
\end{equation}
Its expected numbers of component evaluations and resolvent evaluations are each at most
\begin{equation}\label{eq:upper-cost}
C\left(n+\frac{\sqrt n LR}{\varepsilon}\right)
\end{equation}
for a universal constant $C$. No logarithm of a problem parameter is hidden in $C$.
\end{theorem}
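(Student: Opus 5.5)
The proof tracks the recursive-regularization root sequence, following the template of \citet{allenzhu18,foster19} adapted to operators. For each stage $s$ we define the exact stage root $x_s^*=\zer(F+B_{\lambda_s,b_s})$. For $s\ge K$ we also define the shifted root $\tilde x_s^*=\zer(F+B_{\lambda_s,b_s}+e_s)$ with $e_s=v_s-F(x_s)$. Strong monotonicity with modulus $\Lambda_s:=\mu+\sum_{j<s}\lambda_j\ge\lambda_{s-1}$ gives two facts: $\norm{\tilde x_s^*-x_s^*}\le\norm{e_s}/\Lambda_s$, and the root-movement identity. The latter comes from testing the inclusions for $x_s^*$ and $x_{s+1}^*$ against each other. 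It gives $\norm{x_{s+1}^*-x_s^*}\le(\lambda_s/\Lambda_{s+1})\norm{x_{s+1}-x_s^*}$, since $B_{\lambda_{s+1},b_{s+1}}-B_{\lambda_s,b_s}=\lambda_s(\Id-x_{s+1})$.

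Monotonicity of $F+G$ at $u^*$ with the regularizer gives $\norm{x_0^*-u^*}\le R$ and $\norm{x_0-x_0^*}\le R$. The invariant to carry is
\[
\rms{x_{s+1}-x_s^*}\le \frac{c\,\varepsilon}{\lambda_s}\quad\text{or a geometric decay }\ \rms{x_s-x_s^*}\le 2^{-s}R\cdot c',
\]
which I will establish by induction. For $s<K$ this follows from the contraction by $1/4$ of \textsc{VRContract} (Appendix~\ref{app:vr}, Theorem~4.6 of \citet{cad24}) combined with the movement bound, since $\lambda_s/\Lambda_{s+1}\le 1$ and $\Lambda_s\ge\lambda_s/2$.

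For the centered iteration \eqref{eq:centered} I would prove a one-stage lemma. The conditional mean of $Q_t$ is $F(w_t)+e$, and the variance is at most $L^2\norm{w_t-a}^2\le 2L^2(\norm{w_t-\tilde x^*}^2+\norm{a-\tilde x^*}^2)$. Standard stochastic forward--backward analysis with nonexpansive resolvent and strong monotonicity $\lambda$ then gives
\[
\E\norm{w_{t+1}-\tilde x^*}^2\le(1-\tau\lambda)\E\norm{w_t-\tilde x^*}^2+c\tau^2L^2\norm{a-\tilde x^*}^2 .
\]
With $\tau$ from \eqref{eq:inner-parameters}, $\tau L^2/\lambda\le1/256$, and $N$ steps yield $\rms{w_N-\tilde x^*}\le\tfrac18\rms{a-\tilde x^*}$. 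For the tracking step \eqref{eq:tracking}, the new error satisfies $e_{s+1}=e_s+(\text{zero-mean batch noise})$. Its RMS increment is at most $L\rms{x_{s+1}-x_s}/\sqrt{m_s}\le\lambda_s\rms{x_{s+1}-x_s}/9$. Hence $\rms{e_s}/\lambda_s$ is bounded by a geometrically weighted sum of stage displacements, and the martingale structure gives orthogonal increments, so the squared errors add. Combining this with the shifted-root stability closes the induction, with distances to $x_s^*$ decaying like $2^{-s}$ relative to $\varepsilon/\lambda_s$.

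The main obstacle is this coupled recursion between the persistent shift $e_s$ and the stage distances. The shift is not re-zeroed, and its accumulated drift must stay below a constant fraction of $\lambda_s\rms{x_s-x_s^*}$. I would first try a Lyapunov quantity $\Phi_s=\rms{x_s-x_s^*}+\rms{e_s}/\lambda_s$ and show $\Phi_{s+1}\le\frac12\Phi_s+\cO(\varepsilon/\lambda_s)$ using the constants $1/8$ and $1/9$.

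At the final stage, $\lambda_S\ge L$. The correction \eqref{eq:final} is one exact forward--backward step on $F+B_{\lambda_S,b_S}$, so $\norm{y-x_S^*}\le\norm{x-x_S^*}$, and the residual $F(y)+g$ equals $L(x-y)+F(y)-F(x)-\lambda_S y+b_S$. The last terms telescope via $b_S=\mu u_0+\sum\lambda_jx_{j+1}$ into $\mu(y-u_0)+\sum_{j<S}\lambda_j(y-x_{j+1})$. Bounding each term by the distances from the induction yields $\sum_j\lambda_j\rms{x_{j+1}-x_S^*}=\cO(\varepsilon)$ and $\mu\rms{y-u_0}\le\mu\cdot\cO(R)=\cO(\varepsilon)$. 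Tracking the explicit constants should give $29\varepsilon/32$.

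For the cost, the stages $s<K$ cost $\sum_s\cO(n+\sqrt n L/\lambda_s)=\cO(nK+\sqrt nL/\mu)$. Here $K=\cO(\log(1+L/(\mu\sqrt n)))$, and $nK$ is absorbed because $n\log(1+x)\lesssim n+\sqrt n\,x$ with $x=LR/(\varepsilon\sqrt n)$. The later stages cost $n+\sum_{s\ge K}\cO(N_s+m_s)=n+\sum\cO(1+L^2/\lambda_s^2)$. The geometric sum is $\cO(n)$ since $\lambda_K\ge L/\sqrt n$, and the number of later stages, $S-K+1\le\log_2\sqrt n+2$, contributes $\cO(n)$. The final correction costs $2n$, and the resolvent counts match the iteration counts.
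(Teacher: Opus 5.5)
\paragraph{Verdict.} There is a genuine gap in your root-tracking induction, at exactly the point you flag as the main obstacle.

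\paragraph{Why the induction does not close.} The certificate needs $\lambda_Sx-b_S=\mu(x-u_0)+\sum_{j<S}\lambda_j(x-x_{j+1})$ to be $\cO(\mu R)$. Since $\lambda_j=\mu2^j$, this requires the stage distances and displacements to decay strictly faster than $2^{-s}$; the paper gets $R4^{-s}$. With decay $2^{-s}$ the sum is $\cO(S\mu R)$, a $\log(LR/\varepsilon)$ loss, which is precisely what the theorem excludes. Your invariants sit at or below this borderline.

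In the early phase, using only $\lambda_s/\Lambda_{s+1}\le1$ gives $\norm{x_{s+1}-x_{s+1}^*}\le2\norm{x_{s+1}-x_s^*}$. That yields the factor $2\cdot\frac14=\frac12$ and hence your displayed $2^{-s}Rc'$. Two sharper facts are available: $\Lambda_{s+1}=\lambda_{s+1}=2\lambda_s$ exactly, and better still, $\Id-J_{H_s/\lambda_s}$ is nonexpansive, so $\norm{x_{s+1}-x_{s+1}^*}\le\norm{x_{s+1}-x_s^*}$.

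In the later phase the problem is structural. You measure against the exact roots $x_s^*$ and charge the stored-estimate error through $\norm{\tilde x_s^*-x_s^*}\le\norm{e_s}/\lambda_s$. But $e_s$ is never re-zeroed. By the very orthogonality you invoke, $\rms{e_s}^2=\sum_{K\le j<s}\rms{\Delta_j}^2$ is nondecreasing in $s$, and it can be of the order of the first increment, $\lambda_KR4^{-K}$. Then $\rms{e_s}/\lambda_s$ is of order $2^{K-s}R4^{-K}$, which exceeds the needed $R4^{-s}$ by a factor $2^{s-K}$ (up to order $\sqrt n$ at $s=S$). So no induction on $\rms{x_s-x_s^*}$ can deliver the required rate.

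Your Lyapunov target $\Phi_{s+1}\le\frac12\Phi_s+\cO(\varepsilon/\lambda_s)$ has the same defect. Since $\varepsilon/\lambda_s=4R2^{-s}$, it only yields $\Phi_s=\cO(s2^{-s}R)$. The accumulated drift does not need to be small relative to $\lambda_s$ times the current stage distance; only the fresh increment does.

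\paragraph{The missing idea.} Use the shifted roots $z_s=\zer(F+B_{\lambda_s,b_s}+e_s)$ as the reference in every later stage. These are what the centered iteration actually contracts toward. Then compare consecutive shifted roots directly. Since $\widetilde H_{s+1}=\widetilde H_s+\lambda_s(\Id-x_{s+1})+\Delta_s$ with $\Delta_s=e_{s+1}-e_s$, the root $z_{s+1}$ is the point $J_{\widetilde H_s/\lambda_s}(x_{s+1})$ moved by at most $\norm{\Delta_s}/(2\lambda_s)$, by $2\lambda_s$-strong monotonicity. That point is no farther from $x_{s+1}$ than $z_s$, by nonexpansiveness of the displacement map. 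With $\rms{\Delta_s}\le\frac{9L}{8\sqrt{m_s}}D_s\le\lambda_sD_s/8$, this gives $D_{s+1}\le D_s/8+D_s/16\le D_s/4$. The accumulated shift is absorbed into $z_s$ and never enters the per-stage recursion. It appears only once, in the final certificate, via $\rms{e_S}\le\sum_s\rms{\Delta_s}\le\mu R/4$; your orthogonality bound also works here.

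\paragraph{Final certificate and cost.} Comparing $y$ with $z_S$ gives $\rms{F(y)+g}\le6\lambda_S\delta_S+\frac32\rms{e_S}+\frac52\mu R\le\frac{29}{8}\mu R$. Your comparison with the exact root $x_S^*$ is legitimate, since the forward--backward map is a contraction when $\lambda_S\ge L$. However, you assert the constant $29/32$ rather than derive it, and your route would produce different constants. Your inner-solver lemma and cost accounting are fine and match the paper.
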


The proof is given in Appendix~\ref{app:upper}. We summarize the estimates that remove the additive overhead. In a later stage, let $e_s=v_s-F(x_s)$ and let $z_s$ be the zero of $F+B_{\lambda_s,b_s}+e_s$. The centered iteration contracts the conditional squared distance to $z_s$ by $1/64$ in $\cO(1+L^2/\lambda_s^2)$ calls. The sampled update between stages and strong monotonicity then give
\begin{equation}\label{eq:overview-invariants}
\rms{x_s-z_s}\le R4^{-s},\qquad
\rms{e_S}\le\frac{\mu R}{4},\qquad
\rms{\lambda_Sx_{S+1}-b_S}\le\frac52\mu R.
\end{equation}
The final correction \eqref{eq:final} converts these estimates to the inclusion residual in Theorem~\ref{thm:upper}.

Since $\lambda_K\ge L/\sqrt n$ and the regularization doubles, the initialization, inner iterations, and batches in the later phase cost
\begin{equation}\label{eq:overview-tail-cost}
n+\sum_{s=K}^{S}\cO\left(1+\frac{L^2}{\lambda_s^2}\right)=\cO(n).
\end{equation}
The early phase costs $\cO(nK+\sqrt nL/\mu)$. If $t=L/(\mu\sqrt n)>1$, then $K\le1+\log_2t$ and $nK=\cO(n+\sqrt nL/\mu)$; otherwise $K=0$. Combining the phases gives the stated complexity bound.

\paragraph{Accuracy regimes.}
Table~\ref{tab:regimes} makes the improvement over restart-only regularization explicit. At $LR/\varepsilon$ of order $\sqrt n$, the target complexity is order $n$: repeated full initializations are then the dominant avoidable cost. At much higher accuracy, the $\sqrt n\,LR/\varepsilon$ term already dominates both upper bounds. The result identifies the finite-sum overhead across these regimes; the asymptotic comparison alone does not imply a smaller practical budget with the displayed constants. Section~\ref{sec:experiments} evaluates tuned implementations separately.

\begin{table}[tbp]
\centering\small
\caption{Dependence of the upper bounds on the accuracy regime, with $n\ge4$ and $0<\varepsilon\le LR/2$. Universal constants are suppressed.}\label{tab:regimes}
\begin{tabular}{@{}lll@{}}
\toprule
Regime & Switching & Restart-only upper bound\\
\midrule
$2\le LR/\varepsilon\le\sqrt n$ & $\cO(n)$ & $\cO(n\log(2+n))$\\
$\sqrt n<LR/\varepsilon<\sqrt n\log(2+n)$ & $\cO(\sqrt n\,LR/\varepsilon)$ & $\cO(n\log(2+n))$\\
$LR/\varepsilon\ge\sqrt n\log(2+n)$ & $\cO(\sqrt n\,LR/\varepsilon)$ & $\cO(\sqrt n\,LR/\varepsilon)$\\
\bottomrule
\end{tabular}
\end{table}

\section{Lower Complexity Bound}\label{sec:lower}
We use component-value access and specialize to $G=0$. The algorithm starts at zero. Every query point and the output must lie in the linear span of previously returned component vectors. Component choices, scalar coefficients, batching, and stopping may depend arbitrarily on the transcript and internal randomness. The proof also applies to the larger, coordinate-based class of zero-respecting algorithms. Dimension may grow with the budget.

\begin{theorem}[Expected-budget lower bound]\label{thm:lower}
Let $n\ge1$, $L,R>0$, and $0<\varepsilon\le LR/2$. Any randomized linear-span component-oracle algorithm guaranteeing expected residual at most $\varepsilon$ on every instance satisfying Assumption~\ref{ass:main} has worst-case expected component cost $M$ satisfying
\begin{equation}\label{eq:lower}
M\ge\frac1{64}\left(n+\frac{\sqrt n LR}{\varepsilon}\right).
\end{equation}
The lower bound holds for $G=0$ and affine monotone components.
\end{theorem}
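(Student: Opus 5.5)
The bound splits into the two terms $n$ and $\sqrt n\,LR/\varepsilon$. Since $\max\{a,b\}\ge(a+b)/2$, it suffices to prove separately that $M\ge n/32$ and that $M\ge\sqrt n\,LR/(32\varepsilon)$. Each term gets its own hard family of affine instances with $G=0$.

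\emph{The $n$ term.} Place $n$ orthogonal coordinate directions $e_1,\dots,e_n$ and hide one in each component, for example
\[
F_i(x)=\sqrt n\,L\,\bigl(x_{(i)}-c\,e_i\bigr)\quad\text{on block } i,
\]
or a skew-symmetric variant with the same structure. The constants are chosen so that:
\begin{itemize}
\item $F$ is monotone;
\item the mean-square Lipschitz constant is exactly $L$;
\item the solution has norm $R$.
\end{itemize}
Under the linear-span restriction, block $i$ stays zero until component $i$ is queried. Each untouched block then contributes a fixed amount (of order $LR/\sqrt n$) to $\norm{F(x)}$. By Jensen, if fewer than $n/2$ components are queried in expectation, then with probability at least $1/2$ more than half the blocks are untouched. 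Taking the blocks in random order (or averaging over a random permutation) then forces $\E\norm{F(x)}>\varepsilon$ whenever $\varepsilon\le LR/2$.

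\emph{The accuracy term.} Use the standard chain construction adapted to a residual target. Take $F_i(x)=\sqrt n\,A\,x_{(i)}-b_{(i)}$ on $n$ disjoint blocks, where:
\begin{itemize}
\item each $A$ is a skew-plus-small-identity bidiagonal ``chain'' operator of length $T$, for example $A=\delta I+$ a cyclic-shift difference;
\item only the first chain coordinate carries the linear term.
\end{itemize}
Because components act on disjoint blocks, mean-square Lipschitz forces a factor $\sqrt n$ on each component. The mean then has Lipschitz constant $L/\sqrt n$ per block, while each query of component $i$ advances the chain in block $i$ by at most one coordinate (zero-respecting). For affine monotone $F(x)=Mx-b$ with $M$ a scaled bidiagonal chain, I will use the polynomial/Krylov argument: after $k$ steps the residual satisfies
\[
\min_{x\in\mathcal K_k}\norm{Mx-b}\ge c\,\frac{L}{\sqrt n}\,\frac{R}{k}
\]
for a chain of length about $2k$. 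This is the known optimal $\Omega(LR/k)$ residual rate for skew/monotone linear systems. Summing across blocks, the residual is dominated by the blocks that received few queries. If the expected total budget is $M$, then by Markov's inequality at least half the blocks (in expectation, after random relabeling) received at most $4M/n$ queries each. This gives a residual of at least
\[
c\,\frac{L}{\sqrt n}\cdot\frac{R}{M/n}\cdot(\text{block weight}),
\]
and balancing the per-block radius $R/\sqrt n$ gives $\E\norm{F(x)}\gtrsim \sqrt n\,LR/M$. Setting this at most $\varepsilon$ yields $M\gtrsim \sqrt n\,LR/\varepsilon$.

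The main obstacle is making the chain bound hold directly for the residual with explicit constants, under adaptive stopping and only an expected budget. I would handle the expected budget by a Markov truncation: condition on the event that the cost is at most $4M$, which has probability at least $3/4$, and absorb the loss into the constant $1/64$. I would handle adaptivity by noting that zero-respecting progress is pathwise deterministic given the query counts. For the explicit residual bound I would choose
\[
A=\tfrac{L}{2\sqrt n}\,(\text{upper-bidiagonal } I-\text{shift}) \quad\text{plus its skew completion},
\]
which gives an explicit solution and lets me show that any vector supported on the first $k<T/2$ coordinates has residual at least a constant times $\norm{b}/k$.
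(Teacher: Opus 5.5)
\textbf{Genuine gap: the additive $n$ term.} Your accuracy-term argument is essentially the paper's: disjoint block chains with component scale $\sqrt n L$, one new coordinate per query, pigeonhole on half the blocks, and Markov truncation of the expected budget. The additive $n$ term, however, has a genuine gap. In your family $F_i(x)=\sqrt n L(x_i-c)e_i$ with $c=R/\sqrt n$, the mean is $F(x)=\frac{L}{\sqrt n}(x-c\mathbf 1)$. So an untouched coordinate contributes $LR/n$ to $F(x)$, not $LR/\sqrt n$. Even with every block untouched, $\norm{F(0)}=LR/\sqrt n$. For $n\ge4$ and $\varepsilon=LR/2$, the algorithm that outputs $0$ after zero queries already meets the tolerance, so this family cannot force $M\ge n/32$.

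This is not a matter of constants, nor is it fixed by the skew variant. Whenever each component acts only on its own block, apply the mean-square condition to displacements inside block $i$: it gives block Lipschitz constant $L_i\le\sqrt nL$. Since $F$ acts on block $i$ as $F_i/n$, it is $L/\sqrt n$-Lipschitz, and hence $\norm{F(0)}\le LR/\sqrt n$. Your construction is essentially the chain family with $m=1$. It yields $M\gtrsim n$ only when $\varepsilon\lesssim LR/\sqrt n$, which is exactly the regime where $\sqrt nLR/\varepsilon\gtrsim n$ already. (Separately, the probabilistic step there should be Markov on the query count, not Jensen.)

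\textbf{The missing idea.} The mean must keep its full Lipschitz scale $L$, with the hidden information carried by a large constant offset. This is admissible because Assumption~\ref{ass:main} constrains only component differences. The paper uses the one-dimensional family $F_i(x)=Lx-nL\theta R\,\mathbf 1_{\{i=J\}}$, with a uniform hidden index $J$ and a uniform sign $\theta$. Until $J$ is queried, every response is $Lx$, so the output is independent of $\theta$ and $\frac L2(|x-R|+|x+R|)\ge LR$. A counting argument over the no-discovery transcript bounds the discovery probability by $2M/n$ under adaptive stopping. This gives $M\ge n/4$ with no span restriction at all.

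Within the linear-span model you could instead repair your own construction as $F_i(x)=Lx-\sqrt nLR\,e_i$. The mean is then $L(x-\frac{R}{\sqrt n}\mathbf 1)$, each unqueried coordinate contributes $LR/\sqrt n$, and having at least half of them unqueried forces $\norm{F(x)}\ge LR/\sqrt2$.

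\textbf{Two smaller points.}

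First, the two terms are not independent, as your opening reduction suggests. The chain bound has the form $\E\norm{F(u_{\rm out})}\ge\sqrt nLR/(c(4M+n))$, where the $+n$ comes from the chain length $\lfloor4M/n\rfloor+1$. Turning this into $M\gtrsim\sqrt nLR/\varepsilon$ needs $n\le4M$ from the additive part, as in the paper, or a case split.

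Second, your closing claim that the residual is at least a constant times $\norm{b}/k$ is inconsistent with your earlier, correct, root-norm$/k$ rate. For $b=ae_1$ and $D=I-S_m$, the root has norm $a\sqrt m$. The sharp bound, by telescoping and Cauchy--Schwarz, is $\norm{Dz-ae_1}^2\ge a^2/(q+1)$, i.e.\ of order $\norm{b}/\sqrt k$. Taken literally, $\norm{b}/k$ would give only a $k^{-3/2}$ rate in terms of the root norm. The plain bidiagonal $D$ is already monotone, so you need neither a skew completion nor a cyclic shift.
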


The proof, given in Appendix~\ref{app:lower}, has two parts. For the accuracy-dependent term, distribute $n$ independent linear chains across the components. Let $D=I-S_m$, where $S_m$ shifts a length-$m$ vector one coordinate to the right, and let $E_i$ embed the $i$th block. The hard family is
\begin{equation}\label{eq:hard-main}
F_i(u)=\frac{\sqrt n L}{2}\,E_i\left(Du^{(i)}-\frac{R}{\sqrt{nm}}e_1\right).
\end{equation}
A component query reveals at most one further coordinate of its chain. A telescoping residual calculation then bounds the error for every adaptive allocation of queries. A constant-probability truncation handles expected budgets.

The additive $n$ term has a separate information argument: a single hidden component contains an unknown signed offset. Until that component is queried, the algorithm cannot identify the sign of the root. Combining the two families gives \eqref{eq:lower}.

\paragraph{How the chain obstructs faster convergence.}
The blocks make information local to the queried component. A response from component $i$ can extend the known prefix of block $i$ by one coordinate, but it does not reveal the remaining blocks. Approximating one chain very accurately therefore leaves an error in the others. For a fixed total budget $T$, choosing the chain length $m=\lfloor2T/n\rfloor+1$ ensures that at least half the blocks remain incomplete. Appendix~\ref{app:lower} obtains the pathwise residual bound
\[
 \|F(u_{\rm out})\|\ge
 \frac{\sqrt n LR}{2\sqrt2(2T+n)}.
\]
The bound holds for every adaptive allocation of the $T$ queries, including allocations based on the observed values. Its dependence on $\sqrt n$ comes from normalizing the component differences by the mean-square constant $L$ while keeping the total solution distance equal to $R$. It is not obtained by replacing a gap tolerance with a residual tolerance.

Expected query budgets require an additional step because the algorithm may stop at a random time. If its expected cost is at most $M$, the probability of using at most $2M$ queries is at least one half. Applying the pathwise construction at that budget and retaining this constant-probability event yields an expected-residual obstruction. The additive information bound supplies $M\ge n/4$, allowing the two terms to be combined with a universal constant. The proof therefore accommodates adaptive stopping without a confidence-dependent logarithm. The construction may increase the dimension with the budget; the conclusion concerns worst-case component-query complexity in the stated oracle class, not arithmetic complexity in a fixed dimension.

\begin{corollary}[Optimality in the stated oracle model]\label{cor:optimal}
For $0<\varepsilon\le LR/2$, the worst-case expected component complexity is
\[
\Theta\left(n+\frac{\sqrt n LR}{\varepsilon}\right)
\]
within the randomized linear-span component-oracle model that also permits the specified $G$-resolvent outputs. In particular, the upper-bound algorithm is linear-span on the lower-bound subclass $G=0$.
\end{corollary}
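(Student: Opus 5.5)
The corollary follows by combining Theorem~\ref{thm:upper} with Theorem~\ref{thm:lower}; the only extra work is checking that Algorithm~\ref{alg:main} belongs to the oracle class where the lower bound applies.

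\emph{Upper bound.} Fix $0<\varepsilon\le LR/2$. Theorem~\ref{thm:upper} gives an algorithm whose output pair $(y,g)$ satisfies $\rms{F(y)+g}\le\varepsilon$. By Jensen, this implies $\E\,\res_T(y)\le\E\norm{F(y)+g}\le\varepsilon$, which is the expected-residual criterion used in Theorem~\ref{thm:lower}. The expected component cost is at most $C(n+\sqrt n LR/\varepsilon)$, and so is the expected resolvent cost. The worst-case expected complexity is therefore $\cO(n+\sqrt nLR/\varepsilon)$ in any model that permits component and $G$-resolvent queries.

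\emph{Lower bound.} Any algorithm that is correct on all instances satisfying Assumption~\ref{ass:main} is in particular correct on the subclass with $G=0$ and affine monotone components. Theorem~\ref{thm:lower} then forces $M\ge\frac1{64}(n+\sqrt nLR/\varepsilon)$, provided the algorithm lies in the linear-span class on that subclass. Combining the two bounds gives $\Theta(\cdot)$, with universal constants $1/64$ and $C$.

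\emph{Membership in the class.} This is the step I expect to need the most care. With $G=0$, the resolvent is $J_{\eta G}=\Id$, so the identity \eqref{eq:resolvent-identity} turns every regularized resolvent into an affine combination $(v+\tau b)/(1+\tau\lambda)$. I would argue by induction over the algorithm's steps that every iterate, center, estimate $v_s$, vector $b_s$, and the output $y$ lies in the linear span of $u_0=0$ and the component values returned so far. Each update is a linear combination of earlier vectors and new oracle outputs: \eqref{eq:centered}, \eqref{eq:tracking}, $b_{s+1}=b_s+\lambda_sx_{s+1}$, and \eqref{eq:final}. The starting point is $u_0=0$, which matches the lower-bound convention; a general $u_0$ can be handled by translation. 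Scalar coefficients such as $\tau$ and $\lambda_s$ depend only on $L,R,\varepsilon,n$, and index choices and full-refresh decisions use internal randomness, all of which the model allows. The remaining point is \textsc{VRContract} from Appendix~\ref{app:vr}. Its reflected/variance-reduced updates are also linear combinations of component outputs and previous iterates passed through resolvents, which are affine when $G=0$, so it stays in the span as well. Once this is verified, the upper-bound algorithm is a member of the class to which Theorem~\ref{thm:lower} applies, and the two bounds together give the stated $\Theta(n+\sqrt nLR/\varepsilon)$.
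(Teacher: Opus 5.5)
Your proposal is correct and follows the paper's own route. The paper gives no separate proof: the corollary combines Theorem~\ref{thm:upper} (with Jensen passing from the RMS certificate to the expected residual) with Theorem~\ref{thm:lower}, and notes that for $G=0$ every resolvent in \eqref{eq:resolvent-identity} is affine, so all queries and the output stay in the span of returned component values. Strictly, the $\cO$ direction of the $\Theta$ also needs the algorithm to lie in the general-$G$ linear-span class. That follows from your same induction once past $G$-resolvent outputs are admitted into the span.
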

The parameter $L$ is the mean-square Lipschitz constant. Appendix~\ref{app:lower} records the construction's different scaling under average cocoercivity.

\section{Numerical Experiments}\label{sec:experiments}
\begin{figure}[!htbp]
\centering
\includegraphics[width=0.94\linewidth]{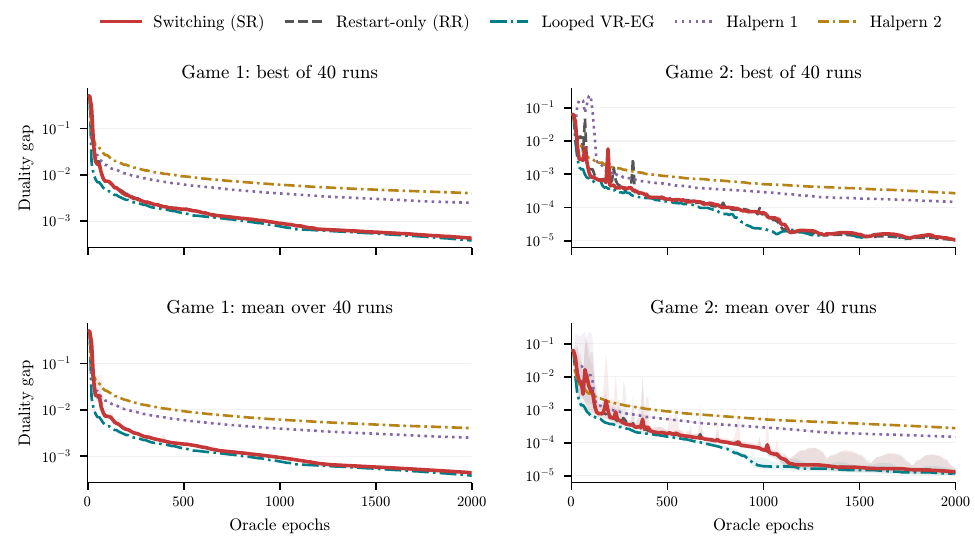}
\caption{Top: for each method and game, the complete trajectory with the smallest final duality gap among 40 runs. Bottom: means over all 40 runs, with the full observed range shaded. Best-run curves are selected outcomes, not estimates of expected performance. Lower is better; one oracle epoch costs $n$ component calls.}\label{fig:experiments}
\end{figure}

We compare tuned switching regularization (SR), its restart-only ablation (RR), looped variance-reduced extragradient (VR-EG), and Algorithms 1 and 2 of \citet{cad24} on two $500\times500$ simplex games from \citet{alacaoglu22}. Each method uses the same fixed budget and 40 independent sampling runs. Parameters are frozen after separate validation. Figure~\ref{fig:experiments} presents both the best observed run of each method and the complete-run average. One oracle epoch is $n$ component evaluations, including full initializations and refreshes. The tuned SR/RR schedules differ from Theorem~\ref{thm:upper}; Appendix~\ref{app:checks} gives the setup.

On Game 2, SR's best observed duality gap is $1.06\times10^{-5}$, compared with $1.05\times10^{-5}$ for RR and $1.02\times10^{-5}$ for looped VR-EG. The best SR gap is $13.9$ and $25.3$ times smaller than the best gaps of the two Halpern baselines. Averaging all 40 runs gives $1.33\times10^{-5}$ for SR and $1.18\times10^{-5}$ for looped VR-EG. Looped VR-EG also leads on Game 1. Thus the best observed SR runs are competitive, while the aggregate comparison does not establish a consistent practical advantage from switching.

\section{Conclusion}
We establish the optimal worst-case expected component complexity $\Theta(n+\sqrt n LR/\varepsilon)$ for finite-sum monotone inclusions in the randomized linear-span oracle model, for $0<\varepsilon\le LR/2$. Carrying an operator estimate through the strongly regularized stages removes repeated full-sum initializations. The matching residual lower bound allows adaptive stopping and expected query budgets.

Further work could seek a single-loop method that retains this rate without requiring supplied bounds on $L$ and $R$. Sharper control of the inner solves may also reduce the conservative constants and support practical parameter choices with provable guarantees. On the lower-bound side, extending the construction beyond linear-span algorithms would clarify whether the same complexity is unavoidable under unrestricted component-value queries.\label{page:main-end}

\typeout{MAIN-TEXT-PAGES=\getpagerefnumber{page:main-end}}
\clearpage

\section*{Acknowledgements}

Codex with GPT-6 Astra and Claude Science Opus 5.5 were used to assist with the development of the construction and proofs, numerical checks of algebraic identities, the review of related literature, and the preparation and revision of this manuscript. The author is responsible for the mathematical claims, references, and final presentation.

\typeout{REFERENCES-START-PAGE=\number\value{page}}
\begingroup
\raggedright
\urlstyle{same}
\bibliographystyle{plainnat}
\bibliography{references}
\endgroup

\clearpage
\typeout{APPENDIX-START-PAGE=\number\value{page}}
\appendix
\setlength{\parskip}{2pt}
\setlength{\abovedisplayskip}{8pt plus 2pt minus 3pt}
\setlength{\belowdisplayskip}{8pt plus 2pt minus 3pt}
\setlength{\belowdisplayshortskip}{4pt plus 2pt minus 2pt}
\section{Operator Facts and the Early-Stage Solver}\label{app:vr}
\subsection{Resolvents and perturbations of roots}
Under Assumption~\ref{ass:main}, $F$ is continuous, monotone, and defined on all of $\R^d$. Consequently $T=F+G$ is maximally monotone. For every $\lambda>0$ and every vector $c$, $T+\lambda\Id-c$ has a unique zero, namely $J_{T/\lambda}(c/\lambda)$. Both $J_A$ and $\Id-J_A$ are nonexpansive when $A$ is maximally monotone. If $A$ is $\lambda$-strongly monotone, $J_{\tau A}$ is $(1+\tau\lambda)^{-1}$-Lipschitz.

We also use the following elementary perturbation estimate. Suppose $A$ is $\lambda$-strongly monotone, $0\in A(z)$, and $0\in A(z')+h$. Strong monotonicity gives
\[
\lambda\norm{z-z'}^2\le\ip{h}{z-z'}\le\norm{h}\norm{z-z'},
\]
and hence $\norm{z-z'}\le\norm{h}/\lambda$. These facts apply pathwise to operators whose affine terms are measurable functions of previous samples.

To verify \eqref{eq:resolvent-identity}, the relation $y=J_{\tau(G+\lambda\Id-b)}(v)$ is equivalent to
\[
v+\tau b\in(1+\tau\lambda)y+\tau G(y).
\]
Dividing by $1+\tau\lambda$ yields the claimed identity. This also verifies the oracle implementation of every subsolver below.

\subsection{A fixed-factor specialization of variance-reduced reflected splitting}
\begin{lemma}[Published strong-monotonicity subsolver]\label{lem:early}
Under Assumption~\ref{ass:main}, let $z$ be the zero of $F+B_{\lambda,b}$. There is a routine \textnormal{\textsc{VRContract}} starting at $a$ such that
\[
\E[\norm{a^+-z}^2\mid a,b]\le\frac1{16}\norm{a-z}^2
\]
at expected cost $\cO(n+\sqrt n L/\lambda)$. The resolvent count satisfies the same order bound.
\end{lemma}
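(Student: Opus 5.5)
We will obtain \textsc{VRContract} by applying the strongly monotone variance-reduced reflected splitting of \citet[Theorem 4.6]{cad24} to $F+B_{\lambda,b}$ for a fixed number of epochs, and then stopping at a suitable point. The first step is to check that the hypotheses of that theorem hold. The operator $B_{\lambda,b}=G+\lambda\Id-b$ is maximally monotone and $\lambda$-strongly monotone, since the constant shift $-b$ does not affect monotonicity. Assumption~\ref{ass:main} gives monotonicity of $F$ and mean-square $L$-Lipschitz continuity of its components, and the affine term leaves component differences unchanged. The backward step is available through \eqref{eq:resolvent-identity}, so each resolvent of $B_{\lambda,b}$ costs one $G$-resolvent call. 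The theorem is applied conditionally on $(a,b)$, which are measurable with respect to earlier samples. As noted in the preceding subsection, the operator facts hold pathwise, so conditioning is harmless.

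The second step is to read off the contraction. That theorem, with step size $\tau\asymp 1/(\sqrt n L)$ and snapshot probability $p\asymp 1/n$ (or the loopless analogue), gives a Lyapunov function $\Phi_k$. This function satisfies $\E\Phi_{k+1}\le(1-\rho)\E\Phi_k$ with $\rho\asymp\min\{1/n,\lambda/(\sqrt nL)\}$. It also satisfies two comparisons: $\Phi_0\le c_1\norm{a-z}^2$ after one full evaluation $F(a)$ initializing the reference point, and $\Phi_k\ge c_2\norm{w_k-z}^2$. Running $k=\lceil\log(16c_1/c_2)/\rho\rceil$ iterations gives $\E\norm{w_k-z}^2\le\frac1{16}\norm{a-z}^2$. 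We then set $a^+=w_k$, or an appropriate extrapolated or backward iterate if the Lyapunov function controls that iterate instead.

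The cost is one full initialization of $n$ calls, plus $\cO(1)$ component calls per iteration, plus an expected $pn=\cO(1)$ calls per iteration for snapshot refreshes. This totals $n+\cO(k)=\cO(n+n+\sqrt nL/\lambda)$. The number of resolvent calls is $\cO(k)$, which gives the same bound.

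The main obstacle is the second step. We must ensure that the published Lyapunov function dominates $\norm{w_k-z}^2$ and that its initial value is bounded by $\cO(\norm{a-z}^2)$ with universal constants. The concern is extra terms such as $\norm{F(w^{\rm ref})-F(z)}^2/L^2$ or reflected-difference terms. I would handle these by initializing the snapshot and the previous iterate at $a$. Then those terms are at most $\cO(L^2\norm{a-z}^2/L^2)$ by Lipschitz continuity of $F$ and \eqref{eq:msl}, which keeps $c_1/c_2$ universal, so the logarithm in $k$ is a constant. If the published rate were stated only for a specific parameter regime, I would fall back to taking $\tau$ proportional to $\min\{1/(\sqrt nL),\,1/\lambda\}$ and redoing the one-step Lyapunov inequality directly.
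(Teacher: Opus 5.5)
Your proposal is correct and follows the paper's proof. Both apply \citet[Theorem 4.6]{cad24} to $F+B_{\lambda,b}$ with snapshot probability $p=1/n$, a step of order $1/(\sqrt n L)$, and all reference points initialized at $a$. Both run $\cO(n+\sqrt nL/\lambda)$ iterations with a universal logarithmic constant, and charge one full initialization plus $2+pn=3$ expected component calls and one resolvent per iteration. The one detail you should add is the edge case $n=1$, where $p=1/n$ makes the step $\sqrt{p(1-p)}/(2L)$ vanish; the paper handles it by duplicating $F_1$.
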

\begin{proof}
Apply \citet[Theorem 4.6]{cad24} with $A=F$, $B=B_{\lambda,b}$, and strong monotonicity parameter $\lambda$. For $n\ge2$, initialize $u^0=w^0=w^{-1}=a$, set
\[
p=\frac1n,\qquad \eta=\frac{\sqrt{p(1-p)}}{2L},\qquad
M_\lambda=\left\lceil14\left(n+\frac{\sqrt n L}{\lambda}\right)\log(4\sqrt6)\right\rceil,
\]
and for $t=0,\ldots,M_\lambda-1$ draw a uniform component $i_t$ and compute
\begin{align*}
u^{t+1}&=J_{\eta B_{\lambda,b}}\left((1-p)u^t+pw^t
-\eta\big[F(w^t)-F_{i_t}(w^{t-1})+F_{i_t}(u^t)\big]\right),\\
w^{t+1}&=\begin{cases}u^{t+1},&\text{with probability }p,\\w^t,&\text{with probability }1-p.\end{cases}
\end{align*}
The refresh decision is independent of the component draw and past history. Cache $F(w^t)$, computing a full average only when the reference point changes. Return $a^+=u^{M_\lambda}$. The cited theorem's linear estimate gives the stated distance contraction. One full initialization and at most $2+pn=3$ expected component calls per iteration give the cost claim. The same estimate applies after conditioning on the data of an outer stage. If $n=1$, use two identical copies of $F_1$; this only changes universal constants. If $a=z$, the initialized method remains at the root, so no division by a zero initial distance is needed.
\end{proof}

\section{Proof of the Upper Bound}\label{app:upper}
\subsection{The centered contraction lemma}
\begin{lemma}\label{lem:centered}
Fix $a,v,b\in\R^d$ and $\lambda>0$. Let $e=v-F(a)$ and let $z$ be the zero of
\[
\widetilde H=F+B_{\lambda,b}+e.
\]
The iteration \eqref{eq:centered} with parameters \eqref{eq:inner-parameters} satisfies
\[
\E[\norm{w_N-z}^2\mid a,v,b]\le\frac1{64}\norm{a-z}^2.
\]
It uses at most $2N$ component evaluations and $N$ resolvent evaluations, where $N=\cO(1+(L/\lambda)^2)$.
\end{lemma}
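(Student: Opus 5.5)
The plan is to analyze \eqref{eq:centered} as a stochastic forward--backward iteration for the shifted operator $\widetilde H$. The proof has four steps: write the root as a fixed point, bound one step in conditional expectation, absorb the variance term, and unroll. Since $z$ is the zero of $F+e+B_{\lambda,b}$, it satisfies the fixed-point relation
\[
z=J_{\tau B_{\lambda,b}}\bigl(z-\tau(F(z)+e)\bigr).
\]
The conditional mean of $Q_t$ given the past is $v+F(w_t)-F(a)=F(w_t)+e$.

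\textbf{Step 1: one-step bound.} Since $B_{\lambda,b}$ is $\lambda$-strongly monotone, $J_{\tau B_{\lambda,b}}$ is $(1+\tau\lambda)^{-1}$-Lipschitz. Hence
\[
(1+\tau\lambda)^2\norm{w_{t+1}-z}^2\le\norm{w_t-z-\tau(Q_t-F(z)-e)}^2 .
\]
Write $Q_t-F(z)-e=(F(w_t)-F(z))+\xi_t$, where $\xi_t=F_{i_t}(w_t)-F_{i_t}(a)-(F(w_t)-F(a))$ has zero conditional mean. Taking the conditional expectation gives
\[
\norm{w_t-z}^2-2\tau\ip{F(w_t)-F(z)}{w_t-z}+\tau^2\norm{F(w_t)-F(z)}^2+\tau^2\E\norm{\xi_t}^2 .
\]
Monotonicity drops the inner product. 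Jensen's inequality and \eqref{eq:msl} give $\norm{F(w_t)-F(z)}^2\le L^2\norm{w_t-z}^2$. The variance term satisfies
\[
\E\norm{\xi_t}^2\le L^2\norm{w_t-a}^2\le 2L^2\norm{w_t-z}^2+2L^2\norm{a-z}^2 .
\]

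\textbf{Step 2: the contraction factor.} Combining Step 1 yields
\[
\E\norm{w_{t+1}-z}^2\le\frac{1+3\tau^2L^2}{(1+\tau\lambda)^2}\,\E\norm{w_t-z}^2+\frac{2\tau^2L^2}{(1+\tau\lambda)^2}\norm{a-z}^2 .
\]
The choice of $\tau$ gives $\tau L^2\le\lambda/256$, so $3\tau^2L^2\le\tau\lambda/85$. The contraction factor is therefore at most $1-\tau\lambda$, using $\tau\lambda\le1/256$. The additive term is at most $2\tau\cdot(\lambda/256)\norm{a-z}^2=(\tau\lambda/128)\norm{a-z}^2$.

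\textbf{Step 3: unrolling.} With $w_0=a$, set $q=1-\tau\lambda$ and $D=\norm{a-z}^2$. Unrolling gives
\[
\E\norm{w_N-z}^2\le q^ND+\frac{\tau\lambda}{128}\cdot\frac{D}{\tau\lambda}=q^ND+\frac{D}{128}.
\]
Since $N\ge\log 128/(\tau\lambda)$, we have $q^N\le e^{-\tau\lambda N}\le1/128$. The total is at most $D/64$.

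\textbf{Cost count.} Each iteration uses two component evaluations (the same index at $w_t$ and $a$) and one resolvent via \eqref{eq:resolvent-identity}. Then $1/(\tau\lambda)=256(L^2+\lambda^2)/\lambda^2$ gives $N=\cO(1+L^2/\lambda^2)$.

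\textbf{Main obstacle.} The delicate point is that the variance is measured relative to the fixed center $a$, not to $z$. This produces a non-contracting additive term proportional to $\norm{a-z}^2$. The proof works only because that bias-free but persistent floor is of order $\tau\lambda D/128$ per step. Summed against the geometric factor, it contributes $D/128$, which is exactly half of the $1/64$ budget. If the constants fail to close, I would first revisit the splitting of $\norm{w_t-a}^2$ with weights $(1+\delta,1+1/\delta)$ rather than $(2,2)$.
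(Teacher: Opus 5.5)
Your proposal is correct and follows the paper's proof essentially step for step. It uses the same fixed-point characterization of $z$ and the same $(2,2)$ splitting of $\norm{w_t-a}^2$, giving a recursion with factor $(1+3\tau^2L^2)/(1+\tau\lambda)^2\le1-\tau\lambda$ and additive floor $(\tau\lambda/128)\norm{a-z}^2$, which unrolls to $(1/128+1/128)\norm{a-z}^2$. The one step you assert rather than check is the contraction factor bound, which needs $3\tau^2L^2\le\tau\lambda-(\tau\lambda)^2-(\tau\lambda)^3$; the paper states this explicitly, and it holds easily because $3\tau^2L^2\le3\tau\lambda/256$ and $\tau\lambda\le1/256$.
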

\begin{proof}
Condition on the stage data $a,v,b$ throughout. Write $D=\norm{a-z}$ and $r=\tau\lambda$. At iteration $t$, write $Q_t=F(w_t)+e+\xi_t$. Conditional on the inner history $\cF_t$,
\begin{equation}\label{eq:noise-bound}
\E[\xi_t\mid\cF_t]=0,\qquad
\E[\norm{\xi_t}^2\mid\cF_t]\le L^2\norm{w_t-a}^2.
\end{equation}
Indeed, $\xi_t$ is the centered version of $F_{i_t}(w_t)-F_{i_t}(a)$, and removing its mean cannot increase its second moment.

Since $z=J_{\tau B_{\lambda,b}}(z-\tau(F(z)+e))$, the resolvent contraction gives
\begin{align}
\E[\norm{w_{t+1}-z}^2\mid\cF_t]
&\le\frac{1}{(1+r)^2}\E\left[\norm{w_t-z-\tau(F(w_t)-F(z))-\tau\xi_t}^2\mid\cF_t\right]\notag\\
&\le\frac{(1+\tau^2L^2)\norm{w_t-z}^2+\tau^2L^2\norm{w_t-a}^2}{(1+r)^2}\notag\\
&\le\frac{(1+3\tau^2L^2)\norm{w_t-z}^2+2\tau^2L^2D^2}{(1+r)^2}.
\label{eq:centered-recursion}
\end{align}
The second inequality uses monotonicity and Lipschitz continuity of the mean $F$, as well as \eqref{eq:noise-bound}; it does not use component monotonicity.

The choice of $\tau$ yields
\[
0<r\le\frac1{256},\qquad \tau^2L^2\le\frac r{256}.
\]
Thus $3\tau^2L^2\le3r/256\le r-r^2-r^3$, implying
\[
\frac{1+3\tau^2L^2}{(1+r)^2}\le1-r,
\qquad
\frac{2\tau^2L^2}{(1+r)^2}\le\frac r{128}.
\]
Taking expectations in \eqref{eq:centered-recursion} and iterating gives
\[
\E\norm{w_N-z}^2
\le\left((1-r)^N+\frac1{128}\right)D^2
\le\frac{D^2}{64},
\]
where $(1-r)^N\le e^{-rN}\le1/128$. Finally,
\[
\frac1r=256\left(1+\frac{L^2}{\lambda^2}\right).
\]
Each sampled difference costs at most two component calls. The logarithm in $N$ is the fixed numerical constant $\log128$.
\end{proof}

\subsection{The filtration and root-tracking induction}
At the start of stage $s$, condition on all completed stages and all their samples. In a later stage, $x_s,v_s,b_s$ and
\[
e_s=v_s-F(x_s)
\]
are then fixed. The unique zero $z_s$ of $\widetilde H_s=F+B_{\lambda_s,b_s}+e_s$ is measurable with respect to these stage data. For $s\le K$, define $e_s=0$; this agrees with the stored estimate at $s=K$ because $v_K=F(x_K)$. All inner samples are fresh. The batch in \eqref{eq:tracking} is sampled after $x_{s+1}$ has been computed, independently conditional on the complete history through that point.

\begin{lemma}[Stage invariants]\label{lem:invariants}
Let $D_s=\rms{x_s-z_s}$, $\delta_s=\rms{x_{s+1}-z_s}$, and $\bar D_s=R4^{-s}$. Then
\begin{equation}\label{eq:stage-bounds}
D_s\le\bar D_s,\qquad
\delta_s\le\begin{cases}D_s/4,&s<K,\\D_s/8,&s\ge K,\end{cases}
\end{equation}
and $\rms{e_S}\le\mu R/4$.
\end{lemma}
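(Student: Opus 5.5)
We prove all three bounds together by induction on $s$, working with RMS norms and using Minkowski's inequality to combine terms. The two contraction estimates come from the subsolver lemmas. The bound $D_{s+1}\le\bar D_{s+1}$ comes from a root-movement estimate that uses firm nonexpansiveness of the resolvent of $T=F+G$. The bound on $e_S$ comes from orthogonality of the martingale increments of the stored estimate.

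\emph{Base case and contraction step.} Since $e_0=0$, $b_0=\mu u_0$ and $\lambda_0=\mu$, the root $z_0$ is the zero of $T+\mu(\Id-u_0)$, so $z_0=J_{T/\mu}(u_0)$. Because $u^*$ is a fixed point of $J_{T/\mu}$ and $\Id-J_{T/\mu}$ is nonexpansive,
\[
\norm{u_0-z_0}\le\norm{u_0-u^*}\le R .
\]
This gives $D_0\le\bar D_0$. For the bound on $\delta_s$, condition on the stage data. For $s<K$, Lemma~\ref{lem:early} gives a conditional squared contraction by $1/16$. For $s\ge K$, Lemma~\ref{lem:centered} applies with $a=x_s$, $v=v_s$, $b=b_s$; its shift is exactly $e_s$ and its root is $z_s$, giving a contraction by $1/64$. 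Taking total expectations (tower property) and square roots yields $\delta_s\le D_s/4$ and $\delta_s\le D_s/8$, respectively.

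\emph{Root movement (the main step).} Write $\lambda=\lambda_s$ and $\Delta_s=e_{s+1}-e_s$. From $0\in T(z_s)+\lambda z_s-b_s+e_s$ we get
\[
z_s=J_{T/(2\lambda)}\Big(\frac{b_s-e_s+\lambda z_s}{2\lambda}\Big).
\]
Since $\lambda_{s+1}=2\lambda$ and $b_{s+1}=b_s+\lambda x_{s+1}$, the next root is
\[
z_{s+1}=J_{T/(2\lambda)}\Big(\frac{b_s-e_s+\lambda x_{s+1}-\Delta_s}{2\lambda}\Big).
\]
Set $\Phi(x)=J_{T/(2\lambda)}\big((b_s-e_s)/(2\lambda)+x/2\big)$, whose fixed point is $z_s$.
\begin{itemize}
\item By nonexpansiveness of the resolvent, $\norm{z_{s+1}-\Phi(x_{s+1})}\le\norm{\Delta_s}/(2\lambda)$.
\item By firm nonexpansiveness, $\norm{\Phi x-z_s}^2\le\tfrac12\ip{\Phi x-z_s}{x-z_s}$.
\item Expanding $\norm{x-\Phi x}^2$ with the previous inequality gives
\[
\norm{x-\Phi x}^2\le\norm{x-z_s}^2-3\norm{\Phi x-z_s}^2\le\norm{x-z_s}^2 .
\]
\end{itemize}
Combining these, and then applying Minkowski's inequality,
\[
D_{s+1}\le\delta_s+\frac{\rms{\Delta_s}}{2\lambda_s}.
\]
I expect this step to need the most care: the cruder triangle bound $\tfrac32\delta_s$ is too weak to close the early-stage induction, since $\tfrac32\cdot\tfrac14>\tfrac14$.

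\emph{Closing the induction.} For $s<K$ we have $\Delta_s=0$ (recall $e_s=0$ for $s\le K$ and $v_K=F(x_K)$), so $D_{s+1}\le D_s/4\le\bar D_{s+1}$.

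For $s\ge K$, equation \eqref{eq:tracking} gives
\[
\Delta_s=\frac{1}{m_s}\sum_{j=1}^{m_s}\big(F_{i_{s,j}}(x_{s+1})-F_{i_{s,j}}(x_s)\big)-\big(F(x_{s+1})-F(x_s)\big).
\]
This is a conditionally mean-zero average of $m_s$ fresh samples. By \eqref{eq:msl},
\[
\E\big[\norm{\Delta_s}^2\mid\cdot\,\big]\le\frac{L^2\norm{x_{s+1}-x_s}^2}{m_s}\le\frac{\lambda_s^2\norm{x_{s+1}-x_s}^2}{81}.
\]
Using $\rms{x_{s+1}-x_s}\le\delta_s+D_s\le\tfrac98D_s$, we get $\rms{\Delta_s}/(2\lambda_s)\le D_s/16$. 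Hence
\[
D_{s+1}\le\frac{D_s}{8}+\frac{D_s}{16}<\frac{D_s}{4}\le\bar D_{s+1}.
\]

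\emph{The bound on $e_S$.} Since $e_K=0$, we have $e_S=\sum_{s=K}^{S-1}\Delta_s$. Each $\Delta_s$ has zero conditional mean given the history through $x_{s+1}$, so the increments are orthogonal in $L^2$ and
\[
\rms{e_S}^2=\sum_{s}\rms{\Delta_s}^2\le\sum_{s\ge K}\frac{\lambda_s^2}{81}\cdot\frac{81}{64}\,R^2\,16^{-s}.
\]
Since $\lambda_s^2 16^{-s}=\mu^2 4^{-s}$, this is at most
\[
\frac{\mu^2R^2}{64}\sum_{s\ge0}4^{-s}=\frac{\mu^2R^2}{48}\le\frac{\mu^2R^2}{16}.
\]
This gives $\rms{e_S}\le\mu R/4$. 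All conditioning statements hold pathwise, because every root is a measurable function of the stage data and every batch is sampled fresh after $x_{s+1}$ is computed.
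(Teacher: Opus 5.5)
Your proof is correct and follows the paper's argument almost step for step. Your map $\Phi$ is exactly the paper's $J_{\widetilde H_s/\lambda_s}$: both send $x$ to the zero of $\widetilde H_s+\lambda_s(\Id-x)$. Your firm-nonexpansiveness computation is a direct proof of the displacement-map bound $\norm{x-\Phi x}\le\norm{x-z_s}$ that the paper cites. Your resolvent-nonexpansiveness bound $\norm{\Delta_s}/(2\lambda_s)$ is the paper's perturbation estimate for the $2\lambda_s$-strongly monotone operator.

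The one real difference is the final bound on $e_S$.

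\emph{Your route.} You use $L^2$-orthogonality of the conditionally centered increments to get $\rms{e_S}^2=\sum_s\rms{\Delta_s}^2$, hence $\rms{e_S}\le\mu R/\sqrt{48}$. This is valid: for $j<s$, $\Delta_j$ is measurable with respect to the history before batch $s$, and every increment is square integrable by the per-stage bounds. It gives a somewhat smaller constant, which would slightly improve the $\frac{29}{32}$ in Theorem~\ref{thm:upper}.

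\emph{The paper's route.} It applies Minkowski directly, $\rms{e_S}\le\sum_{s=K}^{S-1}\rms{\Delta_s}\le\frac{\mu R}{8}\sum_s 2^{-s}\le\frac{\mu R}{4}$. It explicitly declines to use the centering of $\Delta_s$ when accumulating the errors. This uses only the per-stage RMS bounds, so it does not depend on how increments from different stages are correlated. The geometric decay $\rms{\Delta_s}\le\mu R\,2^{-s}/8$ already makes the plain triangle inequality sufficient for the stated $\mu R/4$.
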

\begin{proof}
The initial root is $z_0=J_{T/\mu}(u_0)$. Since the displacement map $\Id-J_{T/\mu}$ is nonexpansive and vanishes at $u^*$,
\[
D_0=\norm{u_0-J_{T/\mu}(u_0)}\le\norm{u_0-u^*}\le R.
\]
Lemmas~\ref{lem:early} and \ref{lem:centered}, applied conditional on each stage's data and then averaged, give the displayed bounds on $\delta_s$.

For $s<K$, including the transition $s=K-1$ if it occurs, the update of $b_s$ gives
\[
\widetilde H_{s+1}=\widetilde H_s+\lambda_s(\Id-x_{s+1}),\qquad
z_{s+1}=J_{\widetilde H_s/\lambda_s}(x_{s+1}).
\]
Here and below $\Id-a$ denotes $x\mapsto x-a$. The point $z_s$ is a fixed point of $J_{\widetilde H_s/\lambda_s}$. Nonexpansiveness of the corresponding displacement map yields, pathwise,
\[
\norm{x_{s+1}-z_{s+1}}\le\norm{x_{s+1}-z_s}.
\]
Consequently $D_{s+1}\le\delta_s\le D_s/4$.

For $K\le s<S$, define $\Delta_s=e_{s+1}-e_s$. The estimator update implies
\begin{equation}\label{eq:delta-definition}
\Delta_s=\frac1{m_s}\sum_{j=1}^{m_s}
\big[F_{i_{s,j}}(x_{s+1})-F_{i_{s,j}}(x_s)\big]
-\big[F(x_{s+1})-F(x_s)\big].
\end{equation}
Let $\cF_{s,+}$ be the history through $x_{s+1}$ before this fresh batch. Then
\[
\E[\Delta_s\mid\cF_{s,+}]=0,\qquad
\E[\norm{\Delta_s}^2\mid\cF_{s,+}]
\le\frac{L^2}{m_s}\norm{x_{s+1}-x_s}^2.
\]
Since $\rms{x_{s+1}-x_s}\le D_s+\delta_s\le9D_s/8$, the batch size gives
\begin{equation}\label{eq:bias-increment}
\rms{\Delta_s}\le\frac{9L}{8\sqrt{m_s}}D_s
\le\frac{\lambda_sD_s}{8}\le\frac{\lambda_s\bar D_s}{8}.
\end{equation}
Introduce the intermediate root
\[
\bar z_{s+1}=J_{\widetilde H_s/\lambda_s}(x_{s+1}).
\]
Its distance from $x_{s+1}$ is at most $\norm{x_{s+1}-z_s}$. Moreover,
\[
\widetilde H_{s+1}
=\widetilde H_s+\lambda_s(\Id-x_{s+1})+\Delta_s.
\]
The operator before the constant shift $\Delta_s$ is $2\lambda_s$-strongly monotone. The perturbation estimate in Appendix~\ref{app:vr} therefore gives
\[
\norm{z_{s+1}-\bar z_{s+1}}\le\frac{\norm{\Delta_s}}{2\lambda_s}.
\]
By Minkowski's inequality,
\[
D_{s+1}\le\delta_s+\frac{\rms{\Delta_s}}{2\lambda_s}
\le\frac{D_s}{8}+\frac{D_s}{16}\le\frac{D_s}{4}.
\]
Induction proves \eqref{eq:stage-bounds}. Since $e_K=0$, one more use of Minkowski's inequality gives
\begin{equation}\label{eq:total-bias}
\rms{e_S}\le\sum_{s=K}^{S-1}\rms{\Delta_s}
\le\frac{\mu R}{8}\sum_{s=K}^{S-1}2^{-s}
\le\frac{\mu R}{4}.
\end{equation}
This argument does not assume independence between stage roots or between accumulated stage errors. In particular, conditional centering of $\Delta_s$ is not used to set $e_s$ to zero.
\end{proof}

\subsection{The accumulated regularizer}
\begin{lemma}\label{lem:regularizer}
For $x=x_{S+1}$,
\[
\rms{\lambda_Sx-b_S}\le\frac52\mu R.
\]
\end{lemma}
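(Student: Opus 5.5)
The plan is to rewrite $\lambda_Sx-b_S$ as a weighted sum of displacements between iterates, and then control each displacement with the stage invariants of Lemma~\ref{lem:invariants}. The identity behind this is that the regularization schedule telescopes: $\lambda_S=\mu2^S=\mu+\sum_{j<S}\mu2^j=\mu+\sum_{j<S}\lambda_j$. Combined with $b_S=\mu u_0+\sum_{j<S}\lambda_jx_{j+1}$, this gives the pathwise identity
\[
\lambda_Sx-b_S=\mu(x-u_0)+\sum_{j<S}\lambda_j\,(x-x_{j+1}),\qquad x=x_{S+1}.
\]
By Minkowski's inequality in $L^2$, it then suffices to bound $\rms{x_{S+1}-u_0}$ and each $\rms{x_{S+1}-x_{j+1}}$.

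\textbf{Step 1: one-step movements.} By the triangle inequality through the stage root $z_s$, we have $\norm{x_{s+1}-x_s}\le\norm{x_{s+1}-z_s}+\norm{x_s-z_s}$. Taking RMS norms, Lemma~\ref{lem:invariants} gives
\[
\rms{x_{s+1}-x_s}\le\delta_s+D_s\le\tfrac54D_s\le\tfrac54R4^{-s}.
\]
I will use the weaker early-stage factor $1/4$ uniformly, so no case split between $s<K$ and $s\ge K$ is needed.

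\textbf{Step 2: telescoping the displacements.} Telescoping with Minkowski gives
\[
\rms{x_{S+1}-x_{j+1}}\le\sum_{s=j+1}^{S}\tfrac54R4^{-s}\le\tfrac54\cdot\tfrac43R4^{-(j+1)}=\tfrac5{12}R4^{-j}.
\]
The same computation started at $s=0$, using $x_0=u_0$, gives
\[
\rms{x_{S+1}-u_0}\le\tfrac54\cdot\tfrac43R=\tfrac53R.
\]

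\textbf{Step 3: summing the weighted terms.} Inserting these bounds with weights $\lambda_j=\mu2^j$ gives
\[
\sum_{j<S}\mu2^j\cdot\tfrac5{12}R4^{-j}\le\tfrac5{12}\mu R\sum_{j\ge0}2^{-j}=\tfrac56\mu R.
\]
Adding $\mu\cdot\tfrac53R$ for the first term yields $\tfrac53\mu R+\tfrac56\mu R=\tfrac52\mu R$, which is the claim. If $S=0$, the sum is empty and the bound holds trivially.

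I expect no step to be a genuine obstacle. The key point is to see that the weights $\lambda_j$ grow like $2^j$ while the displacements decay like $4^{-j}$, so the sum converges geometrically. Beyond that, the only care required is to keep every estimate in RMS form and combine terms via Minkowski, rather than invoking any independence between stages. That is legitimate because Lemma~\ref{lem:invariants} already supplies unconditional RMS bounds on $D_s$ and $\delta_s$.
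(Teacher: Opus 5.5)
Your proof is correct and follows the paper's argument essentially line for line. You use the same identity $\lambda_Sx-b_S=\mu(x-u_0)+\sum_{j<S}\lambda_j(x-x_{j+1})$, the same one-step bound $\tfrac54R4^{-s}$ from Lemma~\ref{lem:invariants}, the same tail bound $\tfrac53R4^{-(j+1)}$ via Minkowski, and the same final count $\tfrac53\mu R+\tfrac56\mu R=\tfrac52\mu R$.
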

\begin{proof}
Lemma~\ref{lem:invariants} gives
\[
\rms{x_{s+1}-x_s}\le\frac54R4^{-s}.
\]
Summing these increments from $s$ onward yields
\begin{equation}\label{eq:tail-displacements}
\rms{x-x_s}\le\frac53R4^{-s}.
\end{equation}
The definitions of $\lambda_s,b_s$ imply the exact identity
\[
\lambda_Sx-b_S=\mu(x-u_0)+\sum_{j=0}^{S-1}\lambda_j(x-x_{j+1}).
\]
Using \eqref{eq:tail-displacements},
\[
\rms{\lambda_Sx-b_S}
\le\frac53\mu R+\frac53\mu R\sum_{j=0}^{S-1}2^j4^{-(j+1)}
\le\frac53\mu R+\frac56\mu R=\frac52\mu R.
\]
\end{proof}

\subsection{A genuine inclusion-residual certificate}
Let $x=x_{S+1}$, $\lambda=\lambda_S$, $b=b_S$, $z=z_S$, $e=e_S$, and $\delta=\delta_S$. By the final-stage contraction,
\begin{equation}\label{eq:last-distance}
\lambda\delta\le\frac18\mu R2^{-S}\le\frac18\mu R.
\end{equation}
The point $z$ satisfies
\[
z=J_{L^{-1}B_{\lambda,b}}\big(z-L^{-1}(F(z)+e)\big).
\]
Compare this with the definition of $y$ in \eqref{eq:final}. Since $\lambda\ge L$,
\[
\rms{y-z}\le\frac{2L\delta+\rms{e}}{L+\lambda}
\le\delta+\frac{\rms{e}}{L+\lambda},
\qquad
\rms{x-y}\le2\delta+\frac{\rms{e}}{L+\lambda}.
\]
The resolvent relation gives $g=L(x-y)-F(x)-\lambda y+b\in G(y)$. Hence
\begin{align*}
\rms{F(y)+g}
&\le 2L\rms{x-y}+\rms{\lambda y-b}\\
&\le(2L+\lambda)\rms{x-y}+\frac52\mu R\\
&\le(4L+2\lambda)\delta+\frac{2L+\lambda}{L+\lambda}\rms{e}+\frac52\mu R\\
&\le6\lambda\delta+\frac32\rms{e}+\frac52\mu R\\
&\le\left(\frac34+\frac38+\frac52\right)\mu R
=\frac{29}{32}\varepsilon.
\end{align*}
The penultimate line uses $\lambda\ge L$; the last uses \eqref{eq:total-bias} and \eqref{eq:last-distance}. This proves the accuracy part of Theorem~\ref{thm:upper}.

\subsection{Oracle complexity}
The expected early-phase cost is at most a universal constant times
\[
nK+\sum_{s=0}^{K-1}\frac{\sqrt n L}{\mu2^s}
\le nK+\frac{2\sqrt n L}{\mu}.
\]
If $t=L/(\mu\sqrt n)>1$, then $K\le1+\log_2t\le1+t/\ln2$, so $nK=\cO(n+\sqrt n L/\mu)$. If $t\le1$, there is no early phase.

In the later phase, the number of inner iterations is bounded by
\[
\sum_{s=K}^{S}\left[1+256\log128\left(1+\frac{L^2}{\lambda_s^2}\right)\right],
\]
and the number of component calls for updating the stored estimate is at most
\[
2\sum_{s=K}^{S-1}\left(1+81\frac{L^2}{\lambda_s^2}\right).
\]
Moreover,
\[
\sum_{s=K}^{S}\frac{L^2}{\lambda_s^2}\le\frac43\frac{L^2}{\lambda_K^2}\le\frac43n,
\qquad
S-K+1\le1+\lceil\log_2\sqrt n\rceil=\cO(\log(2+n)).
\]
Since $\log(2+n)=\cO(n)$, the stage count is absorbed into the order-$n$ bound. If $\lambda_K\ge L$, the later phase contains only one stage. Its initial full evaluation costs $n$. The final correction and residual evaluation cost $2n$ additional component calls and one resolvent. Combining all terms gives
\[
\E N_F,\ \E N_G=\cO\left(n+\frac{\sqrt n L}{\mu}\right)
=\cO\left(n+\frac{\sqrt n LR}{\varepsilon}\right).
\]
Here $N_F$ counts component values and $N_G$ counts $G$-resolvents. This completes the proof of Theorem~\ref{thm:upper}.

\section{Proof of the Expected-Budget Lower Bound}\label{app:lower}
\subsection{Oracle restrictions}
For a general $G$, the linear-span class permits each oracle input and output iterate to lie in the span of the initial point, past component values, and past $G$-resolvent outputs. For this appendix, $G=0$ and $u_0=0$, so the resolvent cannot enlarge this span. Each query point and the returned point must belong to the linear span of previously returned component vectors. An algorithm may adaptively choose indices, coefficients, and its stopping time using its entire transcript and internal randomness. The proof only needs the weaker condition that a point never has a nonzero coordinate that has not appeared in a previous oracle response; this is the zero-respecting condition. Every full average is charged as $n$ component queries. The resolvent of $G=0$ is the identity and supplies no new information.

Let $M$ be a uniform upper bound on the algorithm's expected component cost over the admissible instances. If no finite $M$ exists, the theorem is immediate. The following two constructions use the same parameters $n,L,R$ but need not be combined into one instance.

\subsection{An information lower bound for the additive term}
Choose a uniform hidden index $J\in\{1,\ldots,n\}$ and an independent uniform sign $\theta\in\{-1,+1\}$. In one dimension, set
\begin{equation}\label{eq:hidden}
F_i(x)=Lx-nL\theta R\,\mathbf1_{\{i=J\}}.
\end{equation}
Every component is monotone and $L$-Lipschitz, and $F(x)=L(x-\theta R)$ has root $\theta R$. Thus the solution distance is $R$ and \eqref{eq:msl} holds. The large component offset is allowed because the assumption controls differences, not the variance of component values at a point.

Until index $J$ is queried, every response is $Lx$. Conditional on missing $J$, the sign is still uniform and the output cannot depend on it. Therefore, for every such output,
\begin{equation}\label{eq:hidden-error}
\frac L2\big(|x-R|+|x+R|\big)\ge LR.
\end{equation}
To bound discovery probability with adaptive stopping, fix the algorithm's internal randomness and consider its no-discovery transcript. Let $q\le n$ be the number of distinct indices it would query before stopping, or before exhausting all indices. Repeated queries can only increase cost. Its discovery probability is $q/n$, whereas its expected cost over $J$ is at least
\[
\frac1n\sum_{j=1}^q j+\frac{n-q}{n}q
=q-\frac{q(q-1)}{2n}\ge\frac q2.
\]
This also covers an unbounded no-discovery path by truncation; finite expected cost rules out a positive-probability infinite actual run. Averaging over the algorithm's randomness gives discovery probability at most $2M/n$. By \eqref{eq:hidden-error},
\begin{equation}\label{eq:hidden-lower}
\E|F(x_{\rm out})|\ge LR\left(1-\frac{2M}{n}\right).
\end{equation}
The expectation here also averages over the hard distribution. An algorithm guaranteeing expected residual at most $\varepsilon\le LR/2$ on every instance must therefore satisfy
\begin{equation}\label{eq:n-lower}
M\ge n/4>0.
\end{equation}
This part of the proof does not require a span restriction.

\subsection{The block-chain family}
Let $m\ge1$ and partition $u\in\R^{nm}$ into $n$ blocks $u^{(i)}\in\R^m$. Define
\[
S_m(z_1,\ldots,z_m)=(0,z_1,\ldots,z_{m-1}),\qquad D=I-S_m,
\]
and let $E_i$ embed a vector in block $i$. With
\[
\alpha=\frac{\sqrt n L}{2},\qquad a=\frac{R}{\sqrt{nm}},
\]
set
\begin{equation}\label{eq:chain}
F_i(u)=\alpha E_i(Du^{(i)}-ae_1).
\end{equation}
Because $\norm{S_m}\le1$,
\[
\ip{Dz}{z}=\norm{z}^2-\ip{S_mz}{z}\ge0,
\]
so each component is monotone. Also $\norm D\le2$ and
\[
\frac1n\sum_i\norm{F_i(u)-F_i(v)}^2
=\frac{\alpha^2}{n}\sum_i\norm{D(u^{(i)}-v^{(i)})}^2
\le L^2\norm{u-v}^2.
\]
The unique root satisfies $u^{*(i)}=a\mathbf1_m$, since $D\mathbf1_m=e_1$. Its norm is $R$.

A component response has support only in its own block. Applying $D$ can introduce at most the next coordinate of that block, and the offset only introduces its first coordinate. By induction, after $q_i$ queries to component $i$, any zero-respecting output has block-$i$ support contained in the first $q_i$ coordinates, unless the entire block is already revealed. This property holds for every realization of an adaptive randomized algorithm.

\begin{lemma}[Exact restricted-chain minimum]\label{lem:chain}
If $z$ is supported on the first $q<m$ coordinates, then
\[
\norm{Dz-ae_1}^2\ge\frac{a^2}{q+1}.
\]
Equality is attained by $z_j=a(q+1-j)/(q+1)$ for $1\le j\le q$ and zero elsewhere.
\end{lemma}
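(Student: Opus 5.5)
Since $D=I-S_m$, the vector $Dz$ has coordinates $(Dz)_1=z_1$ and $(Dz)_j=z_j-z_{j-1}$ for $2\le j\le m$. If $z$ is supported on the first $q<m$ coordinates, then $(Dz)_{q+1}=-z_q$, all coordinates beyond $q+1$ vanish, and no other coordinate is affected by the restriction. Setting $z_0:=a$ and $z_{q+1}:=0$, we obtain
\[
\norm{Dz-ae_1}^2=\sum_{j=1}^{q+1}(z_j-z_{j-1})^2 .
\]
I will write this as $\sum_{j=1}^{q+1}d_j^2$ with increments $d_j=z_j-z_{j-1}$. The first entry contributes $(z_1-a)^2=d_1^2$, and the entry at position $q+1$ contributes $z_q^2=d_{q+1}^2$. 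The increments are constrained only by the telescoping identity $\sum_{j=1}^{q+1}d_j=z_{q+1}-z_0=-a$.

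Cauchy--Schwarz then gives
\[
a^2=\Bigl(\sum_{j=1}^{q+1}d_j\Bigr)^2\le(q+1)\sum_{j=1}^{q+1}d_j^2 ,
\]
which is the claimed bound. Equality requires all increments to be equal, $d_j=-a/(q+1)$. This yields $z_j=a-ja/(q+1)=a(q+1-j)/(q+1)$ for $1\le j\le q$, with $z_{q+1}=0$ consistent. This matches the stated minimizer.

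The only step requiring care is the first identification. One must check that the coordinate $q+1$ of $Dz$ really equals $-z_q$, which needs $q<m$ so that this coordinate exists. One must also check that the case $q=0$ gives $\norm{ae_1}^2=a^2$, which agrees with the formula. Once the boundary conventions $z_0=a$ and $z_{q+1}=0$ are in place, the rest is immediate.
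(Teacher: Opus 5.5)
Your proof is correct and follows the paper's argument: the first $q+1$ coordinates of $Dz-ae_1$ telescope to $-a$, Cauchy--Schwarz gives the bound, and equal increments $-a/(q+1)$ produce the stated minimizer. Your boundary conventions $z_0=a$, $z_{q+1}=0$ and your separate check of $q=0$ only spell out details that the paper leaves implicit.
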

\begin{proof}
The first $q+1$ coordinates of $Dz-ae_1$ sum to $-a$ because the differences telescope and $z_{q+1}=0$. Cauchy--Schwarz gives the lower bound. For the displayed $z$, these $q+1$ coordinates are each $-a/(q+1)$, and the remaining coordinates vanish.
\end{proof}

\subsection{A pathwise bound for a fixed query budget}
Fix a real budget $T\ge0$ and choose $m=\lfloor2T/n\rfloor+1$. If the total number of component queries is at most $T$, at least $n/2$ blocks have $q_i\le\lfloor2T/n\rfloor=m-1$; otherwise the total number of queries exceeds $T$. On these blocks, Lemma~\ref{lem:chain} gives squared residual at least $a^2/m$ before scaling. The mean operator's block scaling is $\alpha/n=L/(2\sqrt n)$. Hence
\begin{align}
\norm{F(u_{\rm out})}^2
&\ge\frac{L^2}{4n}\frac n2\frac{a^2}{m}
=\frac{L^2R^2}{8nm^2},\notag\\
\norm{F(u_{\rm out})}
&\ge\frac{LR}{2\sqrt{2n}\,m}
\ge\frac{\sqrt n LR}{2\sqrt2(2T+n)}.
\label{eq:fixed-lower}
\end{align}
This is a pathwise statement, with no averaging over an algorithm's randomness.

\subsection{Expected budgets and the explicit constant}
For the uniform expected budget $M>0$, use the chain length $m=\lfloor4M/n\rfloor+1$. Let $Q$ be the algorithm's random component count on this fixed instance. Since $\E Q\le M$, Markov's inequality gives $\Prob(Q\le2M)\ge1/2$. Applying \eqref{eq:fixed-lower} on this event gives
\begin{equation}\label{eq:expected-chain}
\E\norm{F(u_{\rm out})}\ge\frac{\sqrt n LR}{4\sqrt2(4M+n)}.
\end{equation}
Therefore, a uniform expected-residual guarantee at tolerance $\varepsilon$ requires
\[
\frac{\sqrt n LR}{\varepsilon}\le4\sqrt2(4M+n).
\]
From \eqref{eq:n-lower}, $n\le4M$, so
\[
\frac{\sqrt n LR}{\varepsilon}\le32\sqrt2 M,
\qquad
n+\frac{\sqrt n LR}{\varepsilon}\le(4+32\sqrt2)M<64M.
\]
This proves Theorem~\ref{thm:lower}. A worst-case algorithm must handle both hard families; their combination does not require them to share the same instance or dimension.

\subsection{Why the smoothness normalization matters}
For the shift matrix,
\[
2\ip{Dz}{z}-\norm{Dz}^2=\norm z^2-\norm{S_mz}^2\ge0.
\]
Consequently the family \eqref{eq:chain} is average $1/L_c$-cocoercive with $L_c=2\alpha=\sqrt n L$. For $m\ge2$, the ratio is attained by $z=e_1$. Thus this lower bound cannot be reinterpreted as a cocoercive lower bound with the same numerical constant $L$. Similarly, a component resolvent can reveal several chain coordinates at once, so the component-value argument does not prove a lower bound for that stronger oracle.

\section{Restarted Regularization Baseline}\label{app:baseline}
The full-operator bound in Table~\ref{tab:main} follows from Theorem~\ref{thm:upper} with one component, charging $n$ calls per full evaluation.

Using \textsc{VRContract} at every stage gives $e_s=0$, $D_s\le R4^{-s}$, $\delta_s\le D_s/4$, and $\rms{\lambda_Sx-b_S}\le5\mu R/2$. Hence the final correction satisfies
\[
\rms{F(y)+g}\le6\lambda_S\delta_S+\frac52\mu R
\le\left(\frac32+\frac52\right)\mu R=\varepsilon.
\]
Thus this baseline meets tolerance $\varepsilon$. Its expected cost is
\[
\cO\left(n\log(2+LR/\varepsilon)+\sqrt n\,LR/\varepsilon\right)
\subseteq\cO\left(n\log(2+n)+\sqrt n\,LR/\varepsilon\right).
\]
For the second bound, split into $LR/\varepsilon\le n$ and $LR/\varepsilon>n$. In the latter case, $\log(2+LR/\varepsilon)\le\log(2+n)+\log(LR/(n\varepsilon))$, and $n\log(LR/(n\varepsilon))\le LR/\varepsilon\le\sqrt n\,LR/\varepsilon$.

\section{Experimental Setup}\label{app:checks}
\paragraph{Problems and measurements.}
We solve $\min_{x\in\Delta_n}\max_{y\in\Delta_n}y^\top Ax$ with $n=500$, $x_0=y_0=\boldsymbol1/n$, and the matrices released by \citet{alacaoglu22} (one-based indices):
\[
 A^{(1)}_{ij}=\frac{i+j-1}{2n-1},\qquad
 A^{(2)}_{ij}=\left(\frac{|i-j|+1}{2n-1}\right)^2.
\]
We use Euclidean projection $\Pi$ onto $\Delta_n\times\Delta_n$ and the uniform paired oracle $F_i(x,y)=(nA_{i,:}^{\top}y_i,-nA_{:,i}x_i)$, with $L^2=n\max\{\max_i\|A_{i,:}\|^2,\max_j\|A_{:,j}\|^2\}$. We measure the duality gap $\max_i(Ax)_i-\min_j(A^\top y)_j$. The residual theorem does not bound duality gap; the oracle differs from the source paper's importance-sampled and entropic variants.

\paragraph{Selection and accounting.}
We select parameters by mean final gap over two independent 200-epoch validation runs, then evaluate 40 independent runs per method and game at 2,000 epochs on fixed matrices. The top panels show each method's best final-gap run, selected after evaluation; the bottom panels show the all-run mean and range. Selected runs do not estimate typical performance; shading is the observed range, not a confidence interval. All methods search step multipliers $q\in\{1/8,1/4,1/2,1,2,4,8,16,32\}$; SR/RR also search inner multipliers $a\in\{0.2,1,4,16\}$. Full sums cost $n$ calls and sampled differences cost two. All methods exclude validation and diagnostic calls and retain the preceding output when an update exceeds the budget. An earlier five-run comparison favored looped VR-EG over EG, EAG, loopless VR-EG, VR-FoRB, and Carmon's Euclidean method in mean gap.

\paragraph{Implementations.}
Looped VR-EG uses loop length $n$, mixing weight $1-1/n$, and the latest completed loop average. Halpern 1 uses the released PAGE schedule. Halpern 2 reports completed outer iterates, uses $\max\{1,\lfloor0.05n\log(k+2)\rfloor\}$ inner steps and $c=\sqrt n/\|A\|_F$, and refreshes to the new inner iterate. Base steps are $0.99/(\sqrt nL)$ for SR/RR and looped VR-EG, $0.3/L$ for Halpern 1, and $5\sqrt{p(1-p)}/(\sqrt n+1)$ for Halpern 2, where $p=1/n$. Selected $q$ values for (SR, RR, VR-EG, Halpern 1, Halpern 2) are $(32,32,32,32,32)$ on Game 1 and $(16,16,16,32,2)$ on Game 2; SR/RR use $a=0.2$. Some selections lie at the grid boundary.

SR/RR warm-start sweep $j$ with $\mu_j=(L/\sqrt n)10^{-\min\{j,12\}}$, retaining Algorithm~\ref{alg:main}'s stage strengths and switch threshold but using
\[
 M_s=\lceil a\max\{n,(\gamma\lambda_s)^{-1}\}\rceil,\quad
 N_s=\lceil a(1+L^2/\lambda_s^2)\rceil,\quad
 m_s=\lceil4L^2/\lambda_s^2\rceil.
\]
The centered step is $\min\{\gamma,\lambda_s/[2(L^2+\lambda_s^2)]\}$, followed at each sweep's end by an exact forward--backward correction. RR restarts every stage. The theorem does not cover these parameter choices.

Earlier comparisons on \citet{cad24}'s matrix game and quadratic problem favored VR-EG and EAG, respectively, without a consistent SR advantage over RR. On 27 affine configurations, SR improved on both Halpern baselines, but VR-EG and RR had lower pooled median target-crossing budgets.

\end{document}